\documentclass[11pt,a4paper]{article}
\usepackage[T1]{fontenc}
\usepackage[utf8]{inputenc}
\usepackage[english]{babel}
\usepackage{lmodern,microtype,amsmath,amssymb,amsthm,mathtools}
\usepackage[margin=30mm]{geometry}
\usepackage{enumitem,booktabs}
\usepackage[colorlinks=true,linkcolor=blue,citecolor=blue,urlcolor=blue,linktoc=page,
 pdfauthor={Timothee Benard},
 pdftitle={Proof of the singularity conjecture for discrete subgroups of SL(2,R)}]{hyperref}
\usepackage[capitalise,noabbrev]{cleveref}
\numberwithin{equation}{section}
\newtheorem{theorem}{Theorem}[section]
\newtheorem{proposition}[theorem]{Proposition}
\newtheorem{lemma}[theorem]{Lemma}
\newtheorem{corollary}[theorem]{Corollary}
\newtheorem*{conjecture}{Conjecture}
\theoremstyle{definition}

\newtheorem{remark}[theorem]{Remark}
\newcommand{\R}{\mathbb R}
\newcommand{\C}{\mathbb C}
\newcommand{\Z}{\mathbb Z}
\newcommand{\N}{\mathbb N}
\newcommand{\T}{\mathbb T}
\newcommand{\D}{\mathbb D}
\newcommand{\E}{\mathbb E}
\newcommand{\Prob}{\mathbb P}
\newcommand{\Id}{\mathrm{Id}}
\newcommand{\dd}{\,\mathrm d}
\newcommand{\rot}{\mathsf r}
\newcommand{\Ttor}{\T^2_{\mathrm{tor}}}
\newcommand{\norm}[1]{\lVert#1\rVert}
\DeclareMathOperator{\SL}{SL}
\DeclareMathOperator{\PSL}{PSL}
\DeclareMathOperator{\SO}{SO}
\DeclareMathOperator{\Hom}{Hom}
\DeclareMathOperator{\supp}{supp}
\DeclareMathOperator{\Area}{Area}
\DeclareMathOperator{\im}{im}
\DeclareMathOperator{\rank}{rank}
\DeclareMathOperator{\tr}{tr}
\DeclareMathOperator{\sech}{sech}

\DeclareMathOperator{\Acc}{Acc}
\setlist{topsep=4pt,itemsep=2pt}
\DeclareMathOperator{\leb}{leb}
\title{Proof of the singularity conjecture for discrete subgroups of $\SL_2(\R)$}
\author{Timoth\'ee B\'enard}
\date{September 2026}

\begin{document}
\maketitle
\begin{abstract}
Let $\mu$ be a finitely supported probability measure on $\SL_2(\R)$
whose support generates a discrete Zariski-dense subgroup.
We prove that its stationary probability measure on  $\mathbb P^1(\R)$
is singular with respect to the $\SO(2)$-invariant probability measure.
The proof compares winding variances on finite covers of a closed
hyperbolic surface and uses a topological obstruction.
\end{abstract}

\setcounter{tocdepth}{2}
\tableofcontents
\clearpage

\section{Introduction}
\label{sec:intro}
Let $\D$ be the Poincaré disk, endowed with the usual action of $\PSL_{2}(\R)$.
Let $\Gamma\leq \PSL_{2}(\R)$ be a  Zariski-dense subgroup. Equivalently $\Gamma$ is non-elementary, i.e. it has no finite orbit on $\D\cup \partial \D$, or  equivalently  again,  it contains two hyperbolic elements with disjoint sets of fixed points on $\partial \D$. Let $\mu$ be a probability measure on $\Gamma$ which is generating, i.e. whose support generates $\Gamma$ as a group. It has been known since Furstenberg \cite{Furstenberg63} that $\partial \D$ carries a unique probability measure $\nu$ satisfying the stationarity relation 
\[
                 \nu=\mu*\nu:=\int g_*\nu\dd\mu(g).
\]
We call $\nu$ the Furstenberg measure associated to $\mu$. It is also called the hitting measure on $\partial \D$, due to the fact that for $\mu^{\otimes \N^*}$-almost every  $\underline{\gamma}=(\gamma_{i})_{i\geq 1}$, the orbit trajectory $\gamma_{1}\dots \gamma_{n}o$ ($o\in \D$ arbitrary) converges to some limit point $\xi_{\underline{\gamma}}\in \partial \D$, of law $\nu$ as $\underline{\gamma}$ varies via $\mu^{\otimes \N^*}$. This paper is concerned with the regularity of $\nu$ when $\Gamma$ is a \emph{discrete} subgroup. More precisely,  denoting by $\leb$ the angular probability measure on $\partial \D$, we study whether $\nu$ is singular or absolutely continuous with respect to $\leb$. 

In his 1971 paper, Furstenberg \cite{Furstenberg} provided a first positive result regarding the potential regularity of $\nu$. He showed that every lattice \(\Gamma<\mathrm{PSL}_2(\mathbb R)\) carries a generating probability measure with finite first hyperbolic moment whose hitting measure $\nu$ is exactly the angular measure $\leb$.
Surprisingly,  in the early 1990s, Guivarc'h and Le Jan obtained a strong result in the opposite direction: for $\Gamma=\Gamma(2)$ the level-two congruence subgroup of $\PSL_{2}(\Z)$, and any generating measure $\mu$ with finite first moment for a \emph{word} metric, the hitting measure is  singular: $\nu \perp \leb$, \cite{GLJ90,GLJ93,GLJ96}. Their proof is based on comparing the homological winding for typical geodesic rays chosen by  $\nu$ and $\leb$ wrapping around  the three-horned sphere $\Gamma(2)\backslash \D$, and exhibiting different limit behaviors.

Note that compared to Furstenberg's setting, Guivarc'h and Le Jan imposed a stronger moment assumption on $\mu$. Pushing this intuition, Kaimanovich and Le Prince formulated the following general singularity conjecture  \cite{KLP}. 

\begin{conjecture}[Kaimanovich--Le Prince] 
Every probability measure $\mu$ on $\PSL_{2}(\R)$ (or more generally $\PSL_{d}(\R)$) whose support is finite and generates a  Zariski-dense  discrete subgroup $\Gamma$ must have singular hitting measure  at infinity. 
\end{conjecture}

In fact, the original version of the conjecture did not assume  discreteness. It was  motivated by Kaimanovich--Le Prince's result \cite{KLP}: every finitely generated Zariski-dense subgroup of $\PSL_{d}(\R)$, discrete or not, carries a finitely supported symmetric generating measure which is singular at infinity. However, as shown later by B\'ar\'any--Pollicott--Simon  \cite{BaranyPollicottSimon} and Bourgain \cite{Bourgain},  $\PSL_{2}(\R)$ also contains finitary  symmetric measures $\mu$ with absolutely continuous hitting measure, see also Benoist--Quint  \cite{BQ} and Lequen \cite{Lequen} for extensions to more general semisimple  groups. The measures involved in all these counterexamples span a dense subgroup (for the Lie group topology).  Imposing that the group generated by $\supp \mu$ be discrete has been adopted by the community as the natural restriction to make the conjecture potentially valid. 

Since its formulation, the singularity  conjecture was  established in various contexts.
 For $\Gamma<\PSL_{2}(\R)$ a \emph{non-uniform lattice},  the cusp excursion approach initiated by Guivarc'h--Le Jan was developed further by Gadre--Maher--Tiozzo \cite{GMT}  and the author \cite{Benard}, verifying singularity under a finite first word moment. Alternative approaches were also put forward by Deroin--Kleptsyn--Navas \cite{DeroinKleptsynNavas}, comparing Lyapunov expansion exponents for $\nu$ and $\leb$ typical points on the circle, and by Blach\`ere--Ha\"issinsky--Mathieu \cite{BlachereHaissinskyMathieu} showing that for $\mu$ symmetric,  $\nu \not\perp \leb$ would force $\Gamma$ to be quasi-isometric to $\D$, which can't be for their Gromov boundaries are different. 
Beyond $\PSL_{2}(\R)$, Randecker–Tiozzo \cite{RT} extended the cusp-excursion approach to higher-dimensional real hyperbolic settings.

For a Zariski-dense discrete group $\Gamma<\PSL_{2}(\R)$ of \emph{infinite covolume}, Kim–Zimmer \cite[Theorem 1.5]{KZ} established singularity under a finite first hyperbolic-moment assumption, using geodesic tracking and occupation-time estimates. Higher-dimensional and higher-rank results have also been established in this paper. The higher-rank case was also studied independently by  Lee--Tiozzo--Van LimBeek \cite{LVT} for symmetric walks. 
Let us point out that for finitely supported measures, the conjecture can be checked directly: as $\Gamma$ is finitely generated of infinite covolume, its limit set $\overline{\Gamma o}\cap \partial \D$ has angular measure zero \cite{Beardon0}, while $\nu$ is supported on it.

The remaining case  of $\Gamma$ a \emph{uniform lattice} has remained mysterious. Partial results were obtained in the past few years. Carrasco--Lessa--Paquette \cite{CLP} and Kosenko \cite{Kosenko} proved singularity for certain random walks on cocompact Fuchsian groups associated with \emph{regular} hyperbolic polygons, Kosenko's result covering all but finitely many regular-polygon cases. Kosenko--Tiozzo \cite{KT} subsequently extended Kosenko's result to \emph{centrally symmetric} polygons and removed some symmetry assumption. The common principle of these proofs is to establish the strict entropy--drift inequality \(h<\ell\), and then use the entropy--drift dimension formula to conclude that \(\dim \nu=h/\ell<1\).
Closer to the spirit of Guivarc'h--Le Jan, the author \cite{Benard}  used winding techniques to show that absolute
continuity at infinity constrains  $\mu$ to be centered in the abelianization, with a specific  covariance, thus ruling out many candidate measures. Recently,  Bogachev-Kosenko-Tiozzo \cite{BKT}  validated the conjecture for cocompact small deformation of non-uniform lattices equipped with a finitary generating measure. They also obtained related results for Kleinian groups.
\bigskip

We prove the unrestricted  statement for discrete subgroups of $\PSL_{2}(\R)$.

\begin{theorem}
\label{thm:main}
Let $\mu$ be a probability measure on $\PSL_2(\R)$ whose support is finite and generates a Zariski-dense discrete  
subgroup of $\PSL_{2}(\R)$. Then its stationary probability measure $\nu$ on
$\partial \D$ is singular with  $\leb$.
\end{theorem}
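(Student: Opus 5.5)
First, reduce to the case of a uniform lattice. By the discussion in the introduction, infinite covolume is handled by the Lebesgue-null limit set \cite{Beardon0}, and non-uniform lattices by \cite{GMT,Benard}. Replacing $\Gamma$ by a finite-index torsion-free subgroup $\Gamma'$, and $\mu$ by the induced first-return measure $\mu'$, leaves the Furstenberg measure unchanged. The measure $\mu'$ has exponential moments but is no longer finitely supported, so we must keep track of that. So assume $S=\Gamma\backslash\D$ is a closed hyperbolic surface and $\Gamma=\pi_1(S)$. Suppose, for contradiction, that $\nu\not\perp\leb$. By ergodicity, $\nu$ is then equivalent to $\leb$ on a set of positive measure. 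By a standard argument (the Radon--Nikodym derivative is a bounded harmonic object), $\nu\ll\leb$.

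Next, apply the winding technique of \cite{Benard} on every finite cover at once. For any finite-index subgroup $\Gamma_1<\Gamma$ with cover $S_1$, consider two processes in $H_1(S_1,\R)$. The first is the homological winding of the random walk lifted to $S_1$: the walk projects to a Markov chain on $\Gamma_1\backslash\Gamma$, and its winding satisfies a CLT with covariance $Q^{\mu}_{S_1}$. The second is the winding of a Lebesgue-typical geodesic ray, which is $\leb$-Brownian-like with covariance equal to the intersection-form/Hodge quantity $Q^{\leb}_{S_1}$ (Ratner/Sinai CLT for geodesic flow winding). If $\nu\ll\leb$, then, as in \cite{Benard}, tracking of the walk by its limit geodesic gives two conclusions. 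The walk's winding must be centered, and there is a proportionality $Q^{\mu}_{S_1}=\ell\cdot Q^{\leb}_{S_1}$ as quadratic forms on $H_1(S_1,\R)$, where $\ell$ is the drift. Crucially, $\ell$ is the same constant for every cover, since drift is measured in $\D$.

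Then derive a contradiction from how these forms scale under covers. Pull back cohomology classes $\omega\in H^1(S,\R)$ to $S_1$. The geodesic side is intrinsic: its variance is given by the Hodge norm $\|\omega\|^2$, scaled by the area. The random-walk side, in contrast, is the variance of a cocycle for a finite-state Markov chain on the cosets. One would then exploit cohomology classes on $S_1$ that do not come from $S$, for example classes supported near a short lift of a curve or on a handle. For these, the walk variance is controlled combinatorially by the finite support of $\mu$ (a bounded word-length jump set), while the Hodge norm obeys a topological/isoperimetric lower bound that grows differently. The key input is a topological obstruction. A finitely supported walk only produces jumps that cross a bounded number of curves in a fixed fundamental domain. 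Take towers of cyclic covers and compare, for the dual class of a lifted simple closed curve, the $\mu$-variance (a bounded-jump Markov chain quantity, bounded in terms of a harmonic-function Dirichlet energy on the coset graph) with the geodesic-winding variance (the extremal length of the curve's lift). Since the coset graph and the surface are quasi-isometric with constants independent of the cover, these should agree up to uniform constants, and that alone gives no contradiction. So the obstruction must instead use an exact identity in a suitable limit, for instance the $\Z$-cover (infinite cyclic), where the CLT variance equals a precise harmonic-energy quantity. Exactness of the relation $Q^\mu=\ell Q^{\leb}$ for all finite cyclic quotients $\Z/N$ would then force the discrete Dirichlet form of $\mu$ to equal $\ell$ times the continuous one on all periodic harmonic forms. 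I would aim to show this is impossible by a rigidity argument: letting $N\to\infty$ recovers, through the spectral data of the Laplacian on $S$, that $\mu$ would be spectrally equivalent to Brownian motion. That is incompatible with finite support, as seen for instance by analyticity in the twisted-character variable versus a positive-radius singularity.

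The main obstacle is this last step: turning the family of covariance equalities into a genuine contradiction. The uniform quasi-isometry makes all inequalities non-strict, so only an exact, cover-independent identity can break. I would first try the twisted-character route. Write the variance along a character $\chi\colon\Gamma\to\R$ as the second derivative at $0$ of the top eigenvalue of the twisted operator $P_{t\chi}$ for $\mu$, compare it with the pressure function of the geodesic flow twisted by the harmonic representative, and extend the equality off $t=0$ using all covers. The final contradiction would come from the finitely supported $\mu$ giving an entire function in $t$, whereas the geodesic pressure has a different growth. Making the extension to $t\neq0$ rigorous is where I expect the real difficulty.
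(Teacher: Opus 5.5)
Your reduction to torsion-free cocompact $\Gamma$ via the first-return measure is correct and matches the paper. So is your per-cover variance identity: once $\nu\not\perp\leb$, the winding LLN/LIL of \cite{Benard} and Ratner's CLT force, on every finite cover, the centered random-walk covariance to equal a cover-independent constant times the Hodge $L^2$ form.

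\textbf{The gap.} The proof breaks at the step you flag yourself: you never find a mechanism that turns this family of exact identities into a contradiction, and the routes you sketch do not supply one.
\begin{itemize}
\item The quasi-isometry comparison only gives two-sided bounds, as you note.
\item The pressure route mixes up two kinds of twists. Finite covers give information at \emph{unitary} characters of finite order. That information is second-order only: a covariance form on each isotypic piece $H^1_\chi$ of the cover's cohomology. The top eigenvalue of $P_{t\chi}$ for a real class $\chi$ and $t\neq 0$ encodes all higher cumulants, and the CLT/LIL comparison says nothing about those. So nothing in your data lets you ``extend the equality off $t=0$.''
\item An ``entire versus singular'' contradiction has nothing to grip on the unitary side. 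Along a family of nontrivial unitary characters with $\dim H^1_\chi$ constant, the twisted Hodge star varies smoothly (indeed real-analytically), so there is no singularity to detect.
\item Letting $N\to\infty$ in cyclic towers only reproduces the same second-order data in the limit. It does not yield ``spectral equivalence with Brownian motion.''
\end{itemize}

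\textbf{The missing idea} is a global, topological obstruction rather than a local analytic one.
\begin{enumerate}
\item The paper parametrizes the twisted cohomology $H^1_{\chi_q}$ uniformly by $V=\C^{2g-2}$ for a two-parameter family $q=(z,w)\in\T^2$. It sets $\chi_q(a_2)=-1$ so that $\chi_q$ is never trivial. Torsion points $q\in\Ttor$ give finite cyclic covers.
\item In these coordinates the intersection pairing is an explicit Laurent-polynomial matrix $M(q)$ with $\det M=1$. The variance identity on the $\chi_q$-piece then reads $A(q)=\kappa M(q^{-1})J_q$, where $A(q)$ is the centered $\mu$-covariance and $J_q$ is the quarter turn (Hodge star) in period coordinates.
\item Because $\mu$ is finitely supported, $A(q)$ is rational with no poles on $\T^2$. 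After passing to $\Gamma_0$, the paper preserves this through a finite-state resolvent formula. Hence $J_q$ extends from the dense set $\Ttor$ to a matrix over $\C[z^{\pm1},w^{\pm1},F^{-1}]$ with $J^2=-\Id$.
\item Then $P_q=(\Id-iJ_q)/2$ is an algebraic idempotent whose image is a maximal positive subspace for the Hermitian form $iM(q^{-1})$. A unique-factorization argument shows its determinant line has a nowhere-vanishing section over $\T^2$.
\item By contrast, the positive eigenline of $iM_0(q^{-1})$ defines a map $\T^2\to S^2$ of degree $-1$. So the positive eigenbundle admits no such section, which is the contradiction.
\end{enumerate}

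Without this, or some comparably exact and global obstruction, your proposal stops short of a proof.
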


As explained above, we only need to deal with the case where $\Gamma$ is a discrete cocompact subgroup of $\PSL_{2}(\R)$. 

\medskip
The mechanism of the proof goes as follows. 
Given any compact hyperbolic  surface, the author's  winding estimates for $\nu$-rays \cite{Benard} can be compared with Ratner's explicit winding formulas for the angular measure \cite{Ratner} to yield covariance identities, making a bridge between $\mu$ and the geometry of the surface. Those formulas are applied simultaneously to many finite abelian covers of the quotient surface $\Gamma\backslash\D$, which are parametrized  by torsion points of the torus $\T^2$ using the  viewpoint developed by  Koberda \cite{Koberda}. Performing explicit computations, we derive that certain pole-free rational functions  over $\T^2$ coincide on torsion points of $\T^2$, whence over all $\T^2$ by continuity. This identity is then promoted to a topological contradiction.

\begin{remark}
Koberda \cite{Koberda} studies the action of pseudo-Anosov maps
on the homology of finite abelian covers. The character torus
allows him to compare eigenvalues as the cover varies, by using
continuity and compactness. Here, the character torus serves
a different purpose: to obtain a topological obstruction.
\end{remark}

We now give a more detailed summary. It follows the chronology of the paper, and we encourage the reader to use it as a road map later on.

\medskip
\noindent{\bf Summary of the proof.}
Assume $\Gamma$ cocompact discrete and torsion free.  Set $S=\Gamma\backslash\D$ the associated closed hyperbolic surface, write $g\geq 2$ the genus of $S$.  Set $\T^2=S^1\times S^1$, denote by $\Ttor\subseteq \T^2$  the subset of torsion points in $\T^2$. It is a dense countable subset of $\T^2$. For every $q\in \Ttor$, we define a regular finite cover $S_{N_{q}}=N_{q}\backslash\D \rightarrow S$ and non-trivial unitary character $\chi_{q}:\Gamma/N_{q}\rightarrow S^1$. Cohomology classes of $\chi_{q}$-equivariant closed one-forms on $S_{N_{q}}$ are parametrized by $\C^{2g-2}$. 

We  define a Hermitian  form $H_{q}$ on $\C^{2g-2}$ representing a standard intersection pairing for such one-forms on  $S_{N_{q}}$. At first,  $H_{q}$ is only defined for $q\in \Ttor$, but it has an explicit formula (rational in coordinates of $q$) which shows that in fact, it can be extended to all $q\in \T^2$. The form $H_{q}$ has signature $(g-1,g-1)$. We show the positive spectral subspace $E_{+}(q)$ wraps non trivially as $q$ varies in $\T^2$, more precisely, the map $\T^2\rightarrow \bigwedge^{g-1}E_{+}(q)$ is a non-trivial line bundle over $\T^2$. 

On the other hand, assuming $\nu \not\perp \leb$, we  use the winding identities from \cite{Benard} and the CLT of Ratner \cite{Ratner} to relate the covariance of $\mu$ in the abelianization with the form  $H_{q}$ twisted by the quarter-turn map $J_{q}$ acting on one-forms. As $\mu$ has finite support, the covariance expression is rational in the coordinates of $q$, and recalling $H_{q}$ is rational too, it leads to a rational expression for the quarter-turn map  $J_{q}$ as well. This expression allows to extend $J_{q}$ to all parameters $q\in \T^2$, beyond torsion points. Using $J_{q}$, we derive that the projector on some maximal positive subspace $E_{q}$ of $H_{q}$ is rational as well, and this leads to  $\T^2\rightarrow \bigwedge^{g-1}E_{q}$ being a trivial line bundle over $\T^2$. However, $E(q)$ and $E_{+}(q)$ are isomorphic, whence a contradiction with the previous paragraph.

\paragraph{Structure of the paper.}
We first assume that $\Gamma$ is torsion-free and cocompact.
Section~\ref{sec:forms} introduces forms, character cocycles, and the
family of finite covers. Section~\ref{sec:intersection} computes the
intersection form and its topological obstruction.
Section~\ref{sec:winding} compares winding variances under hitting and
angular measures. Section~\ref{sec:rational} obtains rationality of the
quarter turn and the contradiction. Section~\ref{sec:torsion} removes
the torsion-free assumption.

\paragraph{Conventions and notations.}
We write $\D$ for  the Poincaré disk, endowed with a basepoint $o$ which we fix once and for all.

Let $\mu$  be a probability measure on $\PSL_{2}(\R)$ whose support generates\footnote{Generation is meant ``as a group'', not ``as a semigroup''.} a discrete cocompact subgroup $\Gamma<\PSL_{2}(\R)$. By \cite[Lemma~3.6]{CFFT},  $\mu$ is \emph{non-elementary} in the sense that the semigroup spanned by its support contains two hyperbolic matrices with disjoint sets of fixed points on $\partial \D$.
The measure $\mu$ induces a random walk (on the right) on $\Gamma$. Namely,  consider $(Y_{i})_{i\geq1}$ to be independent random variables of law $\mu$, and  set $X_n=Y_1\cdots Y_n$. For $p>0$, we say that $\mu$ has a finite \emph{$p$-th word
moment} if $\E_\mu (|Y_1|^p)<\infty$ where 
$|\cdot|$ is the word length in an arbitrary finite generating set. More restrictively, $\mu$ has a finite \emph{exponential word
moment} if $\E_\mu( e^{a|Y_1|})<\infty$ for some $a>0$. 
Provided  $\mu$ has finite first moment, we may define its \emph{hyperbolic drift} $L_{\mu}>0$ as the unique constant such that
\[
             L_\mu=\lim_{n\to\infty}\frac{d(o,X_no)}n
\]
 almost surely and in $L^1$, see for example \cite[Theorems~1.1--1.2]{MT}. 
 
All Hermitian products are conjugate-linear in the first variable and
linear in the second. Superscripts ${}^*$ and ${}^{\mathsf T}$ denote
conjugate transpose and transpose, respectively.

\paragraph{AI disclosure.} 
This work was developed with the help of ChatGPT, both during the research and the writing. I have personally checked all mathematical arguments and take responsibility for the contents.


\section{Preliminaries on \texorpdfstring{$1$}{1}-forms and finite covers}
\label{sec:forms}

Until the final section, let $\Gamma<\PSL_2(\R)$ be a \emph{torsion-free}
cocompact discrete subgroup, and put $S=\Gamma\backslash\D$.
 For a finite-index normal subgroup
$N\lhd\Gamma$, write
\[
 S_N=N\backslash\D,\qquad D=\Gamma/N,\qquad d=|D|.
\]
The map $S_N\to S$ is a regular cover with deck group $D$, and
$\Area(S_N)=d\Area(S)$. For now, we do not assume that $D$ is abelian,
although the covers used in the proof of \Cref{thm:main} will be cyclic.

\subsection{De Rham cohomology and period function}

Let $\Omega^1(S_N;\C)$ be the space of smooth complex one-forms,
and let $\Omega^1_{\mathrm{cl}}(S_N;\C)$ be its subspace of closed forms.
The quotient by exact forms is the first de Rham cohomology space
$H^1_{\mathrm{dR}}(S_N;\C)$.
The lift $\widetilde\omega$ of a closed form to $\D$
has a primitive $f_\omega$, normalized by $f_\omega(o)=0$.
The \emph{period function} of $\omega$ is defined by
\[
 p_\omega:N\longrightarrow\C,\qquad
 p_\omega(n)=\int_{[o,no]}\widetilde\omega=f_\omega(no).
\]
The integral is independent of the path. Invariance under $N$ gives
\[
 f_\omega(nx)=p_\omega(n)+f_\omega(x).
\]
Consequently, $p_\omega$ is a homomorphism and is independent of the
basepoint. 

\begin{lemma}
\label{lem:periods}
The period map $\omega \mapsto p_{\omega}$ induces an isomorphism
$H^1_{\mathrm{dR}}(S_N;\C)\simeq\Hom(N,\C)$.
\end{lemma}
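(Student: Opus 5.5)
The plan has three steps: check that $\omega\mapsto p_\omega$ descends to cohomology, then prove injectivity and surjectivity directly using that $\D$ is simply connected and $N$ acts freely and properly. Since $\Gamma$ is torsion-free and cocompact, so is the finite-index subgroup $N$, and $\D\to S_N$ is a universal covering with deck group $N$.

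\textbf{Descending to cohomology.} The map $\omega\mapsto p_\omega$ is clearly $\C$-linear. If $\omega=dh$ is exact, the lift satisfies $\widetilde\omega=d\widetilde h$ with $\widetilde h$ $N$-invariant. Hence $f_\omega=\widetilde h-\widetilde h(o)$, and so $p_\omega(n)=\widetilde h(no)-\widetilde h(o)=0$. The map therefore descends to $H^1_{\mathrm{dR}}(S_N;\C)\to\Hom(N,\C)$; the homomorphism property was already noted just before the lemma.

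\textbf{Injectivity.} Suppose $p_\omega=0$. The identity $f_\omega(nx)=p_\omega(n)+f_\omega(x)$ then shows that $f_\omega$ is $N$-invariant. It therefore descends to a smooth function $h$ on $S_N$ with $dh=\omega$, since $\widetilde\omega=df_\omega$ and $\D\to S_N$ is a local diffeomorphism. So $[\omega]=0$.

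\textbf{Surjectivity.} This is the only step needing a construction. Given $\phi\in\Hom(N,\C)$, I want a smooth $F:\D\to\C$ that is $\phi$-equivariant, meaning $F(nx)=F(x)+\phi(n)$.

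1. Since $\phi$ is additive, the function $\phi\otimes 1:N\to\Hom(\R^2\ldots)$ extends to $\Hom(N,\R)\otimes\C$. Working with real and imaginary parts separately, I may assume $\phi$ is real-valued.
2. Choose a smooth partition of unity on $S_N$ subordinate to evenly covered open sets $U_i$, and fix a lift $\widetilde U_i$ of each.
3. Set
\[
F(x)=\sum_i \sum_{n\in N}\rho_i(n^{-1}x)\,\phi(n),
\]
where $\rho_i$ is the pullback of the $i$-th partition function, restricted to $\widetilde U_i$ and extended by zero. For each $x$ the sum is locally finite, by proper discontinuity.
4. To check equivariance, compute
\[
F(mx)=\sum_i\sum_n\rho_i(n^{-1}mx)\phi(n).
\]
Substituting $n=mn'$ gives $F(mx)=\phi(m)\sum_i\sum_{n'}\rho_i(n'^{-1}x)+F(x)$.
5. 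The remaining sum equals $1$, because the translates $n\widetilde U_i$ are disjoint lifts and the $\rho_i$ sum to $1$ downstairs. So $F(mx)=F(x)+\phi(m)$.

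Given this $F$, the form $dF$ is $N$-invariant and closed, so it descends to a closed form $\omega$ on $S_N$. Its primitive is $f_\omega=F-F(o)$, which gives $p_\omega(n)=F(no)-F(o)=\phi(n)$.

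An alternative to the partition-of-unity construction is to invoke de Rham's theorem together with $H^1(S_N;\C)\simeq\Hom(\pi_1 S_N,\C)$ and $\pi_1(S_N)\simeq N$. I prefer the direct construction, since it makes the identification with the period map explicit. I do not expect a serious obstacle; the only point needing care is the local finiteness and reindexing in step 4.
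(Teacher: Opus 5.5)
Your proof is correct and follows the paper's argument. Exact forms have zero periods, vanishing periods make $f_\omega$ descend to a primitive, and surjectivity comes from the equivariant function $\sum_{n\in N}\rho(n^{-1}x)\phi(n)$ built from a function $\rho$ whose $N$-translates sum to one: you obtain $\rho$ by lifting a partition of unity subordinate to evenly covered sets, while the paper normalizes a bump function on a compact fundamental set. Your Step 1 reduction to real-valued $\phi$ is garbled and also unnecessary, since the construction works verbatim for complex-valued $\phi$.
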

\begin{proof}
This is the degree-one de Rham isomorphism \cite[Chapter~18]{Lee};
we give a short direct proof for completeness.

Exact forms have zero periods, so the map
$[\omega]\mapsto p_\omega$ is well defined.

Let $\omega \in \Omega^1_{\mathrm{cl}}(S_N;\C)$. If every period of $\omega$ vanishes, then $f_{\omega}$ is $N$-invariant, so it can be seen as a primitive of $\omega$ on $S_N$, i.e. $\omega$ is exact. This shows injectivity. 

We check surjectivity. To realize a homomorphism $p$, choose a smooth compactly
supported function $\rho$ on $\D$ with
$\sum_{n\in N}\rho(n^{-1}x)=1$. Such a function is obtained by taking
a nonnegative bump function positive on a compact fundamental set and
dividing by the sum of its translates. Proper discontinuity makes that
sum locally finite. The function
\[
                    f(x)=\sum_{n\in N}\rho(n^{-1}x)p(n)
\]
is smooth and satisfies $f(nx)=p(n)+f(x)$, by changing the summation
variable. Therefore $df$ descends to a closed form with periods $p$.
\end{proof}

\subsection{Harmonic representative}

The space of complex one-forms $\Omega^1(S_{N};\C)$ had a standard Hermitian inner product. Explicitly, at a point $x\in S_N$, choose a
positively oriented orthonormal coframe $(e^1,e^2)$ and write, for any one-form $\omega$,
\[
  \omega_x=P_\omega(x)e^1+Q_\omega(x)e^2,
  \qquad P_\omega(x),Q_\omega(x)\in\C.
\]
For two one-forms $\alpha,\beta\in \Omega^1(S_{N};\C)$, set
\[
  \langle\alpha,\beta\rangle_x
  =\overline{P_\alpha(x)}P_\beta(x)
   +\overline{Q_\alpha(x)}Q_\beta(x).
\]
This expression is independent of the chosen orthonormal coframe.
Integrating gives the standard $L^2$ inner product
\[
  \langle\alpha,\beta\rangle_{L^2(S_N)}
  =\int_{S_N}\langle\alpha,\beta\rangle_x\,\dd A(x),
\]
where $\dd A$ is the hyperbolic area measure, without normalization.
Observe  our convention is that this inner product is conjugate-linear in
the first variable and linear in the second.

We can express the inner product in terms of the wedge product and the quarter-turn map. More precisely, the \emph{quarter-turn} $\rot$ is the complex-linear operator on
one-forms defined in any positively oriented orthonormal coframe by
\[
  \rot e^1=e^2,\qquad \rot e^2=-e^1.
\]
It is independent of the chosen coframe and satisfies
$\rot^2=-\Id$. Since $e^1\wedge e^2=\dd A$, the preceding definitions give
\[
  \overline\alpha\wedge\rot\beta
  =\langle\alpha,\beta\rangle_x\,\dd A.
\]
Consequently,
\begin{equation}
\label{eq:energy}
  \langle\alpha,\beta\rangle_{L^2(S_N)}
  =\int_{S_N}\overline\alpha\wedge\rot\beta.
\end{equation}

It will be useful to use certain specific representatives of cohomology classes, called harmonic forms. More precisely, say a smooth one-form $\omega$ is \emph{co-closed} if
\[
  \langle dh,\omega\rangle_{L^2(S_N)}=0
  \qquad\text{for every }h\in C^\infty(S_N;\C).
\]
Say $\omega$ is \emph{harmonic} if it is both closed and co-closed.

\begin{lemma}
\label{lem:hodge}
Every de Rham class $[\omega]$ has a unique harmonic representative
$\omega^{\mathrm h}$. This representative is the unique minimizer of the $L^2$-norm
in its class. The quarter-turn $\rot$ and pullback by an
orientation-preserving isometry preserve harmonic forms.
\end{lemma}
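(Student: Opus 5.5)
The plan is to prove the Hodge statement by minimizing the $L^2$-norm over the affine space $\omega+dC^\infty(S_N;\C)$, and then to identify minimizers with harmonic forms. Uniqueness and the characterization come from a Pythagorean identity. If $\eta$ is closed and co-closed, then for every $h\in C^\infty(S_N;\C)$
\[
 \|\eta+dh\|^2_{L^2}=\|\eta\|^2_{L^2}+2\,\mathrm{Re}\langle dh,\eta\rangle_{L^2}+\|dh\|^2_{L^2}=\|\eta\|^2_{L^2}+\|dh\|^2_{L^2}.
\]
So a harmonic representative is the strict minimizer of the norm in its class, and two harmonic representatives of the same class coincide, because their difference is $dh$ with $\|dh\|=0$. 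This gives uniqueness and the second sentence of the lemma, once existence is known.

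For existence, I would reduce to solving the Laplace equation. Co-closedness of $\omega+dh$ means $\langle dk,dh\rangle=-\langle dk,\omega\rangle$ for all $k$, i.e. $\Delta h=-d^*\omega$ weakly, where $d^*\omega$ is the smooth function defined by $\langle dk,\omega\rangle=\langle k,d^*\omega\rangle$. Integrating by parts on the closed surface $S_N$ (Stokes) shows that $d^*\omega$ is smooth. The right-hand side $-d^*\omega$ is orthogonal to the constants, since $\langle d1,\omega\rangle=0$. By standard elliptic theory on the compact manifold $S_N$, the Laplacian is Fredholm, self-adjoint, with kernel the constants, and elliptic regularity applies. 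Hence there is a smooth $h$ with $\Delta h=-d^*\omega$, and $\omega+dh$ is harmonic. Alternatively, one can minimize $\|\omega+dh\|$ over $h$ in $H^1(S_N)$ by Hilbert-space projection of $\omega$ onto the orthogonal complement of the closure of $dH^1$, and then invoke Weyl's lemma for smoothness. I expect this analytic step to be the only real obstacle. I would handle it by citing the Hodge theorem (e.g.\ Warner, or \cite[Chapter~18]{Lee}-style references) rather than reproving elliptic regularity.

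For the final sentence, start with an orientation-preserving isometry $\phi$. It maps positively oriented orthonormal coframes to such coframes, so $\phi^*$ commutes with $\rot$ and preserves the pointwise inner product and $\dd A$. It also commutes with $d$ and is bijective on $C^\infty$. Therefore $\phi^*$ preserves closedness, and $\langle dh,\phi^*\omega\rangle=\langle d(h\circ\phi^{-1}),\omega\rangle=0$, so it preserves co-closedness. For $\rot$, use \eqref{eq:energy} together with the pointwise identity $\overline\alpha\wedge\rot\beta=\langle\alpha,\beta\rangle\dd A$. For closed $\omega$, co-closedness of $\rot\omega$ reads
\[
 0=\langle dh,\rot\omega\rangle=\int_{S_N}\overline{dh}\wedge\rot^2\omega=-\int_{S_N}d\bar h\wedge\omega=-\int_{S_N}d(\bar h\,\omega),
\]
which holds by Stokes. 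Conversely, closedness of $\rot\omega$ follows from co-closedness of $\omega$. For every $h$,
\[
 \int_{S_N}\bar h\, d(\rot\omega)=-\int_{S_N}d\bar h\wedge\rot\omega=-\langle dh,\omega\rangle=0,
\]
so $d(\rot\omega)=0$. Hence $\rot$ maps harmonic forms to harmonic forms.
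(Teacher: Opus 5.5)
Your proof is correct. For most of the lemma it follows the paper; the genuine difference is how you prove that $\rot$ preserves harmonic forms.

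\textbf{Where you agree with the paper.}
\begin{itemize}
\item Existence is ultimately delegated to the Hodge theorem. The paper cites Warner, applied to real and imaginary parts; you first sketch the reduction to the Poisson equation $\Delta h=-d^*\omega$ and then cite elliptic theory. Your phrase that Stokes ``shows $d^*\omega$ is smooth'' is best read as: \eqref{eq:energy} and Stokes give $d^*\omega\,\dd A=-d(\rot\omega)$.
\item The minimizing property comes from the same Pythagorean identity. You also extract uniqueness from that identity instead of from the citation, which is slightly more self-contained.
\item The isometry invariance is the same argument. Your change of variables $\langle dh,\phi^*\omega\rangle=\langle d(h\circ\phi^{-1}),\omega\rangle$ spells out what the paper states in one line.
\end{itemize}

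\textbf{Where you differ: invariance under $\rot$.}
The paper argues locally. In a positively oriented conformal chart, $\omega=P\,\dd x+Q\,\dd y$ is harmonic exactly when $Q_x-P_y=0$ and $P_x+Q_y=0$, and this system is invariant under $(P,Q)\mapsto(-Q,P)$. That is short and shows the Cauchy--Riemann structure. However, it silently uses that the paper's weak, global definition of co-closedness agrees with the pointwise equation $P_x+Q_y=0$ in conformal coordinates. That equivalence needs an integration by parts together with the conformal invariance of $\rot$ on one-forms.

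Your argument works directly with the weak definition. Using \eqref{eq:energy} and Stokes, you show that $\rot$ sends closed forms to co-closed forms and co-closed forms to closed forms; this uses $\rot^2=-\Id$. This is the coordinate-free statement that $\rot$ is the Hodge star interchanging $\ker d$ and $\ker d^*$. It costs a few more lines, but it avoids local coordinates and the unstated equivalence, so it matches the paper's definitions more closely.
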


\begin{proof}
Existence and uniqueness follow from the Hodge theorem on the
closed oriented surface $S_N$ \cite[Chapter~6]{Warner}, applied
to real and imaginary parts. Orthogonality to exact forms gives
\[
 \norm{\omega^{\mathrm h}+df}_{L^2}^2
 =\norm{\omega^{\mathrm h}}_{L^2}^2+\norm{df}_{L^2}^2
 \qquad (f\in C^\infty(S_N;\C)),
\]
so $\omega^{\mathrm h}$ is the unique norm minimizer in its class.

In a positively oriented conformal coordinate,
$\omega=P\,\dd x+Q\,\dd y$ is harmonic exactly when
\[
 Q_x-P_y=0,\qquad P_x+Q_y=0.
\]
These equations are preserved by $(P,Q)\mapsto(-Q,P)$,
so $\rot$ preserves harmonic forms. Finally, pullback by an
orientation-preserving isometry preserves closed and exact
forms and the $L^2$ inner product, hence also harmonic forms.
Such pullbacks commute with $\rot$ by its coframe definition.
\end{proof}

We equip $H^1_{\mathrm{dR}}(S_N;\C)$ with the Hermitian product of
its harmonic representatives. Thus the metric product on forms defines
a product on cohomology through harmonic replacement; it does not descend
from arbitrary closed representatives. Similarly, $\rot$ acts on
cohomology through harmonic representatives. It does not preserve closed
or exact forms in general.

\subsection{Fourier decomposition and equivariant cocycles}
\label{subsec:characters}

Let $\chi:D\to S^1$ be a unitary character. A closed form is
\emph{$\chi$-equivariant} if $\delta^*\omega=\chi(\delta)\omega$
for every $\delta\in D$. Write
$\Omega^1_{\mathrm{cl}}(S_N;\C)_\chi$ for this space.
Pullback preserves closed and exact forms, so it acts on cohomology by
$\delta^*[\omega]=[\delta^*\omega]$. We define the corresponding
character space by
\[
 H^1_{\mathrm{dR}}(S_N;\C)_\chi
 =\{c\in H^1_{\mathrm{dR}}(S_N;\C):
           \delta^*c=\chi(\delta)c\text{ for every }\delta\in D\}.
\]
The averaging operator
\[
             \Pi_\chi=\frac1d\sum_{\delta\in D}
                                    \overline{\chi(\delta)}\,\delta^*
\]
projects onto the $\chi$-space. It commutes with the differential and
with harmonic replacement. In particular, every class in the
$\chi$-space has a $\chi$-equivariant harmonic representative.
Thus $H^1_{\mathrm{dR}}(S_N;\C)_\chi$ is also the image of
$\Omega^1_{\mathrm{cl}}(S_N;\C)_\chi$ in de Rham cohomology.
If $D$ is abelian, character orthogonality gives
\[
 \Omega^1_{\mathrm{cl}}(S_N;\C)
     =\bigoplus_{\chi\in\widehat D}\Omega^1_{\mathrm{cl}}(S_N;\C)_\chi,
 \qquad
 H^1_{\mathrm{dR}}(S_N;\C)
     =\bigoplus_{\chi\in\widehat D}H^1_{\mathrm{dR}}(S_N;\C)_\chi,
\]
where $\widehat D$ is the set of unitary characters of $D$.
We will work within individual character spaces and will not need the
full decomposition.

We now introduce the associated scalar cocycles. Here and below, $\chi$
also denotes its pullback to $\Gamma$. A \emph{$\chi$-cocycle} is a
function $\sigma:\Gamma\to\C$ satisfying for all $\gamma_{1}, \gamma_{2}\in \Gamma$
\[
 \sigma(\gamma_{1} \gamma_{2})=\sigma(\gamma_{1})+\chi(\gamma_{1})\sigma(\gamma_{2}).
\]
Write $Z^1_\chi(\Gamma;\C)$ for their vector space. The
\emph{coboundaries} form the subspace 
$
 B^1_\chi(\Gamma;\C)
       =\{\gamma\mapsto(\chi(\gamma)-1)a:a\in\C\}$
 and  the first cohomology group is 
 \[      
 H^1_\chi(\Gamma;\C)=Z^1_\chi(\Gamma;\C)/B^1_\chi(\Gamma;\C).
\]

For a $\chi$-equivariant closed form, define its \emph{period cocycle} by
\begin{equation}
\label{eq:period-cocycle}
 \sigma_\omega(\gamma)
    =\int_{[o,\gamma o]}\widetilde\omega=f_\omega(\gamma o).
\end{equation}
Equivariance and the normalization at $o$ give
\[
 f_\omega(\gamma x)
       =\sigma_\omega(\gamma)+\chi(\gamma)f_\omega(x),
\]
which proves the cocycle identity. Its restriction to $N$ is $p_\omega$.

\begin{lemma}
\label{lem:character-periods}
The assignment $\omega\mapsto\sigma_\omega$ induces an isomorphism
\[
 H^1_{\mathrm{dR}}(S_N;\C)_\chi\simeq H^1_\chi(\Gamma;\C).
\]
\end{lemma}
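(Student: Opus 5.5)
I would prove \Cref{lem:character-periods} by the same scheme as \Cref{lem:periods}: check that $\omega\mapsto[\sigma_\omega]$ is well defined on classes, then prove injectivity and surjectivity, building primitives with a partition of unity adapted to a twisted equivariance.

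\emph{Well-definedness.} By the discussion above, $H^1_{\mathrm{dR}}(S_N;\C)_\chi$ is the image of $\Omega^1_{\mathrm{cl}}(S_N;\C)_\chi$, so it suffices to show that if $\omega\in\Omega^1_{\mathrm{cl}}(S_N;\C)_\chi$ is exact, then $\sigma_\omega$ is a coboundary. Write $\omega=dh$ with $h\in C^\infty(S_N;\C)$. Replacing $h$ by $\Pi_\chi h$ keeps $dh=\omega$, because $\Pi_\chi$ commutes with $d$ and fixes $\omega$. So we may assume $h$ is $\chi$-equivariant, i.e.\ its lift satisfies $\tilde h(\gamma x)=\chi(\gamma)\tilde h(x)$. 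The normalized primitive is then $f_\omega=\tilde h-\tilde h(o)$, and
\[
\sigma_\omega(\gamma)=\tilde h(\gamma o)-\tilde h(o)=(\chi(\gamma)-1)\tilde h(o)\in B^1_\chi(\Gamma;\C).
\]
Changing $\omega$ within its class adds an exact equivariant form, so by linearity the map descends to classes.

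\emph{Injectivity.} Suppose $\sigma_\omega(\gamma)=(\chi(\gamma)-1)a$. Set $F=f_\omega+a$. Then
\[
F(\gamma x)=\sigma_\omega(\gamma)+\chi(\gamma)f_\omega(x)+a=\chi(\gamma)F(x).
\]
Restricted to $N$, where $\chi\equiv1$, $F$ is $N$-invariant and descends to a function on $S_N$ with $dF=\omega$. Hence $\omega$ is exact.

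\emph{Surjectivity.} Given a $\chi$-cocycle $\sigma$, I mimic the earlier construction with a twist. Take $\rho\in C^\infty_c(\D)$ with $\sum_{\gamma\in\Gamma}\rho(\gamma^{-1}x)=1$; this is a $\Gamma$-partition of unity, and it exists since $\Gamma$ acts properly discontinuously. Define
\[
f(x)=\sum_{\gamma\in\Gamma}\rho(\gamma^{-1}x)\,\sigma(\gamma).
\]
Substituting $\gamma=\eta\gamma'$ and using the cocycle identity gives
\[
f(\eta x)=\sum_{\gamma'}\rho(\gamma'^{-1}x)\bigl(\sigma(\eta)+\chi(\eta)\sigma(\gamma')\bigr)=\sigma(\eta)+\chi(\eta)f(x).
\]
Consequently $\tilde\omega:=df$ satisfies $\eta^*\tilde\omega=\chi(\eta)\tilde\omega$. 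It is $N$-invariant, so it descends to a closed $\chi$-equivariant form $\omega$ on $S_N$. Its normalized primitive is $f_\omega=f-f(o)$, so
\[
\sigma_\omega(\gamma)=\sigma(\gamma)+(\chi(\gamma)-1)f(o),
\]
which is cohomologous to $\sigma$.

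\emph{Main obstacle.} The only delicate point is the sign and convention bookkeeping for how $D$ acts on $S_N$ through pullback, compared with the lifted action of $\Gamma$ on $\D$. I must check that $\delta^*\omega=\chi(\delta)\omega$ on $S_N$ corresponds to $\gamma^*\tilde\omega=\chi(\gamma)\tilde\omega$ on $\D$ rather than to $\overline{\chi}$. The paper's formula $f_\omega(\gamma x)=\sigma_\omega(\gamma)+\chi(\gamma)f_\omega(x)$ fixes this convention, and I will follow it throughout.
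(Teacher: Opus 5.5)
Your proof is correct. Well-definedness is the paper's argument ($\Pi_\chi$ applied to a primitive). Your injectivity argument, that $f_\omega+a$ is $\chi$-equivariant, hence $N$-invariant, hence a primitive on $S_N$, is the paper's argument unpacked: the paper notes that a coboundary vanishes on $N$, so $p_\omega=0$, and invokes \cref{lem:periods}.

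The genuine difference is surjectivity. The paper proceeds in three steps:
\begin{enumerate}
\item It restricts $\sigma$ to $N$, obtaining a homomorphism $p$ with $p(\gamma n\gamma^{-1})=\chi(\gamma)p(n)$.
\item It takes the harmonic form with periods $p$ via \cref{lem:periods,lem:hodge}, and deduces its $\chi$-equivariance from uniqueness of harmonic representatives.
\item The difference of the two cocycles then vanishes on $N$, so it lives on the finite group $D$. There it is shown to be a coboundary by averaging over $D$.
\end{enumerate}
You instead run the partition-of-unity construction of \cref{lem:periods} over all of $\Gamma$ rather than $N$, with twisted weights. This gives a primitive $f$ with $f(\eta x)=\sigma(\eta)+\chi(\eta)f(x)$ outright. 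Equivariance of $df$ is then automatic, and the discrepancy with $\sigma$ is the explicit coboundary $(\chi-1)f(o)$.

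Your route is self-contained. It needs neither Hodge theory nor the averaging over $D$, only a $\Gamma$-partition of unity, which exists by the same bump-function construction since $\Gamma$ is cocompact and acts properly discontinuously. Finiteness of $D$ enters your proof only through $\Pi_\chi$ in the well-definedness step. The paper's route is shorter given the lemmas it has already established, and it stays within the harmonic-representative framework used in the rest of the paper.
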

\begin{proof}
For an equivariant exact form, applying $\Pi_\chi$ to a primitive gives
an equivariant primitive $h$. Its period cocycle is the coboundary
$(\chi-1)h(o)$, so the map is well defined. It is injective by
\cref{lem:periods}, since every coboundary vanishes on $N$.

For surjectivity, restrict a $\chi$-cocycle to $N$. This gives a
homomorphism $p$ satisfying
$p(\gamma n\gamma^{-1})=\chi(\gamma)p(n)$.
The harmonic form with periods $p$ is therefore $\chi$-equivariant,
by \cref{lem:periods,lem:hodge}. Its cocycle differs from the given one
by a cocycle vanishing on $N$, hence descending to $D$.
If $\tau$ is such a cocycle, averaging
$\tau(\delta\eta)=\tau(\delta)+\chi(\delta)\tau(\eta)$ over $\eta\in D$
gives $\tau(\delta)=a-\chi(\delta)a$, where
$a=d^{-1}\sum_{\eta\in D}\tau(\eta)$. Thus the difference is a
coboundary, proving surjectivity.
\end{proof}

The measure provides another useful choice of representative.
The following centering statement applies to every unitary character
$\chi:\Gamma\to S^1$, including those of infinite order.
Let $\mu$ be a probability measure with finite first word moment whose
support generates $\Gamma$ as a group. A cocycle for a unitary character
has at most linear growth in word length, so its $\mu$-mean is defined.

\begin{lemma}
\label{lem:centered}
If $\chi\ne1$, every class in $H^1_\chi(\Gamma;\C)$ has a unique
\emph{centered} representative $\sigma^0$, meaning $\E_\mu\sigma^0=0$.
It is given by
\begin{equation}
\label{eq:centered}
 \sigma^0(\gamma)=\sigma(\gamma)
       +(\chi(\gamma)-1)\frac{\E_\mu\sigma}{1-\E_\mu\chi}.
\end{equation}
\end{lemma}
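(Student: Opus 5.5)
The plan is to reduce everything to the fact that $\E_\mu\chi\neq 1$. Once that holds, the class of $\sigma$ is the affine line $\{\sigma+(\chi-1)a : a\in\C\}$, and $\E_\mu$ is affine along it. Centering then becomes a single linear equation in $a$.

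\emph{Step 1: $\E_\mu\chi\neq1$.} Since $|\chi|=1$ and $\mu$ is a probability measure, $\E_\mu\chi=1$ forces $\chi(\gamma)=1$ for $\mu$-almost every $\gamma$. Indeed, $\operatorname{Re}(1-\chi)\ge 0$ pointwise, and it vanishes only where $\chi=1$. Because $\chi$ is a homomorphism to $S^1$ and $\supp\mu$ generates $\Gamma$ as a group, $\chi$ would then be trivial on all of $\Gamma$, contradicting $\chi\neq1$. This is the only place where generation and nontriviality enter, and it is the one step that needs an argument rather than a computation. The subtlety is to use the real-part (strict convexity of the disk) argument and not just $|\E_\mu\chi|\le1$.

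\emph{Step 2: integrability.} Fix a finite generating set. The cocycle identity gives
\[
|\sigma(\gamma_1\gamma_2)|\le|\sigma(\gamma_1)|+|\sigma(\gamma_2)|,
\]
because $|\chi|=1$. Hence $|\sigma(\gamma)|\le C|\gamma|$, where $C$ is the maximum of $|\sigma|$ over the generators and their inverses; one uses $\sigma(s^{-1})=-\chi(s)^{-1}\sigma(s)$. So $\E_\mu\sigma$ is finite under the first moment assumption. This is the remark stated just before the lemma.

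\emph{Step 3: existence and uniqueness.} Every representative has the form $\sigma_a=\sigma+(\chi-1)a$, and
\[
\E_\mu\sigma_a=\E_\mu\sigma+(\E_\mu\chi-1)a.
\]
By Step 1 this vanishes for exactly one value,
\[
a=\frac{\E_\mu\sigma}{1-\E_\mu\chi},
\]
which gives \eqref{eq:centered}. The affine dependence also shows uniqueness. Two centered representatives differ by $(\chi-1)b$ with $(\E_\mu\chi-1)b=0$, so $b=0$. A nonzero $b$ would still give a nonzero coboundary, since $\chi\neq1$, so uniqueness means uniqueness within the class, as claimed. Finally, the formula does not depend on which representative $\sigma$ one starts from: replacing $\sigma$ by $\sigma_b$ shifts $a$ by $-b$, so the resulting $\sigma^0$ is the same.
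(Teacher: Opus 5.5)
Your proof is correct and follows the paper's argument: nontriviality of $\chi$ together with generation forces $\E_\mu\chi\neq 1$, because an average of unit complex numbers equals $1$ only if every positively weighted one does, and centering then reduces to a single linear equation in the coboundary parameter. Your Step 2 spells out the linear-growth bound that the paper only states in the remark just before the lemma.
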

\begin{proof}
An average of unit complex numbers can equal $1$ only when each number
with positive weight equals $1$. Thus $\E_\mu\chi=1$ would make $\chi$
trivial on $\supp\mu$, hence on $\Gamma$.
The denominator is nonzero, and solving
$\E_\mu[\sigma+(\chi-1)a]=0$ gives the displayed formula and uniqueness.
\end{proof}

For the trivial character, cocycles are homomorphisms and coboundaries
vanish, so centering cannot be imposed by changing the representative.
All characters used in the remainder of the paper  will be nontrivial.

Note also that the harmonic form  and the centered cocycle representatives need not correspond under
\cref{lem:character-periods}: the former depends on the surface metric,
whereas the latter depends on $\mu$.

\subsection{Finite covers parametrized by periodic torus points and cocycle parametrization}
\label{subsec:family}

We define a family of finite abelian covers of $S$, parametrized by torsion points on the torus $\T^2$. We also show that their twisted cohomology can be parametrized explicitely by $\C^{2g-2}$.  This subsection is inspired by Koberda \cite{Koberda}.
\bigskip

Write $g\ge2$ for the genus of $S$. Choose the standard presentation
\begin{equation}
\label{eq:presentation}
 \Gamma=\left\langle a_1,b_1,\ldots,a_g,b_g\ \middle|\
                    \prod_{j=1}^g[a_j,b_j]=1\right\rangle,
 \qquad [a,b]=aba^{-1}b^{-1}.
\end{equation}
It is obtained by identifying the sides of an oriented $4g$-gon
\cite[Section~1.2]{Hatcher}. 

For $q=(z,w)\in (\C^*)^2$, define the character $\chi_{q}:\Gamma\rightarrow \C^*$ by
\begin{equation}
\label{eq:characters}
 \chi_q(a_1)=z,\quad\chi_q(b_1)=w,\quad
 \chi_q(a_2)=-1,\quad\chi_q(b_2)=1,\quad
 \chi_q(a_j)=\chi_q(b_j)=1\quad(j\ge3).
\end{equation}
These values respect the relation in the presentation of $\Gamma$, so $\chi_{q}$ is well defined. The fixed value $-1$ ensures that the
character is nontrivial everywhere, including at $(z,w)=(1,1)$.

Using the same cocycle and coboundary definitions for these complex
characters, we describe $H^1_{\chi_q}(\Gamma;\C)$ for every
$q\in(\C^*)^2$.
For that, we use the coordinate space  
\[
 V=\C^{2g-2},\qquad \text{  with elements written } v=(x_1,y_1,x_3,y_3,\ldots,x_g,y_g).
\]
For $v\in V$, let $\sigma_{q,v}:\Gamma\to\C$ be the $\chi_q$-cocycle
specified by the generator values
\begin{equation}
\label{eq:generator-values}
 \sigma_{q,v}(a_j)=x_j,\quad\sigma_{q,v}(b_j)=y_j,
 \qquad x_2=0,\qquad y_2=\frac{(1-w)x_1+(z-1)y_1}{2}.
\end{equation}

\begin{lemma}
\label{lem:word-coordinates}
For every $q\in(\C^*)^2$, the formulas above define a $\chi_q$-cocycle.
For each fixed $\gamma\in\Gamma$, its value $\sigma_{q,v}(\gamma)$
is linear in $v$ with Laurent-polynomial coefficients in $z,w$.
\end{lemma}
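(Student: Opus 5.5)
We use the standard correspondence between $\chi$-cocycles and homomorphisms into a semidirect product. For any character $\chi:F\to\C^*$ of a free group $F$ and any prescribed values on the free generators, there is a unique $\chi$-cocycle on $F$ with those values. Indeed, $\gamma\mapsto\begin{pmatrix}\chi(\gamma)&\sigma(\gamma)\\0&1\end{pmatrix}$ is a homomorphism into the affine group exactly when $\sigma$ is a $\chi$-cocycle, and homomorphisms out of a free group are determined freely by generator images.

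Let $F$ be the free group on $a_1,b_1,\dots,a_g,b_g$. We build the cocycle $\tilde\sigma$ on $F$ with the values \eqref{eq:generator-values} and the character $\chi_q$ pulled back to $F$. It descends to $\Gamma$ if and only if the affine homomorphism kills the relator $R=\prod_j[a_j,b_j]$. Since $\chi_q(R)=1$ automatically, this reduces to the single scalar condition $\tilde\sigma(R)=0$.

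To compute $\tilde\sigma(R)$ we use the cocycle rules $\tilde\sigma(\gamma^{-1})=-\chi(\gamma)^{-1}\tilde\sigma(\gamma)$ and $\tilde\sigma(uv)=\tilde\sigma(u)+\chi(u)\tilde\sigma(v)$. Each commutator has trivial character, so these give
\[
\tilde\sigma(R)=\sum_j\tilde\sigma([a_j,b_j]).
\]
For a single commutator, expanding $aba^{-1}b^{-1}$ step by step gives
\[
\tilde\sigma([a,b])=(1-\chi(b))\,\sigma(a)+(\chi(a)-1)\,\sigma(b).
\]
This uses that $\chi$ is abelian, so $\chi(aba^{-1})=\chi(b)$.

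Now plug in the values of $\chi_q$. For $j\ge3$ the character is trivial on $a_j,b_j$, so both coefficients vanish. For $j=1$ we get $(1-w)x_1+(z-1)y_1$. For $j=2$, with $\chi(a_2)=-1$ and $\chi(b_2)=1$, we get $0\cdot x_2+(-2)y_2=-2y_2$. The total is
\[
(1-w)x_1+(z-1)y_1-2y_2,
\]
which is zero precisely by the choice of $y_2$ in \eqref{eq:generator-values}. (The value $x_2=0$ is a free normalization, presumably fixing the coboundary ambiguity.) So $\sigma_{q,v}$ is a well-defined $\chi_q$-cocycle on $\Gamma$.

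For the second claim, write $\gamma$ as any word in the generators and their inverses, and induct on word length using the same two rules. Each step multiplies previous values by a monomial $\chi_q(u)\in\{\pm z^{m}w^{n}\}$ and adds a generator value, or $-\chi_q(s)^{-1}$ times one. The generator values are linear in $v$: the $x_j$ and $y_j$ are coordinates, and $y_2$ is linear in $x_1,y_1$ with polynomial coefficients in $z,w$. Hence $\sigma_{q,v}(\gamma)$ is linear in $v$ with coefficients in $\Z[\tfrac12][z^{\pm1},w^{\pm1}]$, that is, Laurent polynomials. Well-definedness on $\Gamma$ makes the value independent of the chosen word, and we may fix one word per $\gamma$.

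The only step needing care is the commutator formula and the sign conventions for $[a,b]=aba^{-1}b^{-1}$. Everything else is bookkeeping.
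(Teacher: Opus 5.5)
Your proposal is correct and follows essentially the same route as the paper. You encode cocycles as affine representations of the free group, reduce descent to $\Gamma$ to the single condition $(1-w)x_1+(z-1)y_1-2y_2=0$ via the commutator formula $(1-\chi(b))\sigma(a)+(\chi(a)-1)\sigma(b)$, and get the Laurent-polynomial dependence by expanding along a word, including inverse letters.
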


We recall a Laurent polynomial in $z,w$ is a polynomial in $z^{\pm 1}$, $w^{\pm 1}$. 

\begin{proof}
For arbitrary generator values,
\[
 \sigma([a_j,b_j])=(1-\chi_q(b_j))x_j+(\chi_q(a_j)-1)y_j.
\]
All commutators have character one. Thus the relator has cocycle value
zero precisely when $(1-w)x_1+(z-1)y_1-2y_2=0$, as required by
\eqref{eq:generator-values}. Equivalently, the affine matrices
$\left(\begin{smallmatrix}\chi_q(\gamma)&\sigma(\gamma)\\0&1\end{smallmatrix}\right)$
define a representation of the free group in which the relator is the
identity; this representation therefore factors through $\Gamma$.
For a word $\gamma=s_1\cdots s_k$, the formula
\[
 \sigma_{q,v}(\gamma)=\sum_{j=1}^k
       \chi_q(s_1\cdots s_{j-1})\sigma_{q,v}(s_j)
\]
proves the asserted dependence on $v,z,w$. This also includes inverse
letters, since the affine generator matrices and their inverses have
Laurent-polynomial entries.
\end{proof}

\begin{lemma}
\label{lem:coordinates}
For every $q\in(\C^*)^2$, the map $v\mapsto[\sigma_{q,v}]$ is an
isomorphism $V\simeq H^1_{\chi_q}(\Gamma;\C)$.
\end{lemma}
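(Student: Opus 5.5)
The plan is to prove the isomorphism by dimension count together with injectivity. Note first that $\sigma_{q,v}$ depends linearly on $v$ by \cref{lem:word-coordinates}, so $v\mapsto[\sigma_{q,v}]$ is a linear map $V\to H^1_{\chi_q}(\Gamma;\C)$.

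\emph{Computing $Z^1_{\chi_q}$.} A $\chi_q$-cocycle is determined by its values on the $2g$ generators. Conversely, arbitrary generator values define a cocycle exactly when the relator has value zero, by the affine-representation argument in the proof of \cref{lem:word-coordinates}. Since all commutators have character one, the relator constraint reads
\[
 \sum_{j=1}^g\bigl[(1-\chi_q(b_j))\sigma(a_j)+(\chi_q(a_j)-1)\sigma(b_j)\bigr]=0 .
\]
Using $\chi_q(a_2)-1=-2\neq0$, this is a single nontrivial linear equation. Hence $Z^1_{\chi_q}(\Gamma;\C)\simeq\C^{2g-1}$, parametrized freely by all generator values except $\sigma(b_2)$, which is solved for. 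Since $\chi_q$ is nontrivial (because $\chi_q(a_2)=-1$), the coboundary map $a\mapsto(\chi_q-1)a$ is injective, so $B^1_{\chi_q}$ is one-dimensional. Therefore $\dim H^1_{\chi_q}(\Gamma;\C)=2g-2=\dim V$.

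\emph{Injectivity.} Suppose $\sigma_{q,v}=(\chi_q-1)a$ is a coboundary. Evaluating at $a_2$ gives $0=x_2=(\chi_q(a_2)-1)a=-2a$, so $a=0$. Hence $\sigma_{q,v}=0$, and all of its generator values vanish, in particular every $x_j$ and $y_j$ with $j\neq2$; so $v=0$. An injective linear map between spaces of equal finite dimension is an isomorphism.

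The only delicate point is the well-definedness of cocycles from generator values, namely that vanishing on the relator suffices. This is already handled by the affine-matrix argument in \cref{lem:word-coordinates}. Equivalently, the normalization $x_2=0$ is exactly a slice transverse to the coboundary line, which is why injectivity works for every $q$, including $(z,w)=(1,1)$.
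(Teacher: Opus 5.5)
Your proof is correct and uses the same two observations as the paper: the relator is a single linear equation that can be solved for $y_2$ because $\chi_q(a_2)-1=-2$, and the coboundary line is transverse to the slice $x_2=0$. The only difference is how you get surjectivity: you count dimensions, $\dim H^1_{\chi_q}=(2g-1)-1=2g-2$, whereas the paper reaches the slice directly by adding to any cocycle $\sigma$ the coboundary $(\chi_q-1)c$ with $c=\sigma(a_2)/2$.
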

\begin{proof}
Adding the coboundary $(\chi_q-1)c$ changes $x_2$ to $x_2-2c$.
Every class therefore has a unique representative with $x_2=0$.
The relator then determines $y_2$ by \eqref{eq:generator-values},
while the remaining coordinates are free.
\end{proof}

Let $\T^2=S^1 \times S^1$.  Let $\Ttor$ be its dense subset of torsion points, in other terms
$$\Ttor=\{q=(z,w)\in \T^2\,:\, \exists n\geq 1, z^n=w^n=1\}. $$
We use the above to associate to each  $q\in\Ttor$ an abelian finite cover of $S$. More precisely, put
\[
                 N_q=\ker\chi_q,  \qquad S_{N_q}=N_{q}\backslash \D, \qquad d_q=[\Gamma:N_q].
\]
The image of $\chi_q$ is the finite cyclic group generated by $z,w,-1$;
thus $S_{N_q}$ is a finite cover of  degree $d_{q}$.

By \cref{lem:character-periods,lem:coordinates}, the space $V$
parametrizes the character space
$H^1_{\mathrm{dR}}(S_{N_q};\C)_{\chi_q}$.
We let $\omega_{q,v}$ be the harmonic form corresponding
to $[\sigma_{q,v}]$ under \cref{lem:character-periods}.
Its periods on $N_q$ are exactly $\sigma_{q,v}|_{N_q}$. The generator formulas have real coefficients, so, writing
$q^{-1}=(z^{-1},w^{-1})$,
\begin{equation}
\label{eq:conjugation}
 N_{q^{-1}}=N_q,\qquad
 \overline{\sigma_{q,v}}=\sigma_{q^{-1},\overline v},\qquad
 \overline{\omega_{q,v}}=\omega_{q^{-1},\overline v}.
\end{equation}
The last identity follows from uniqueness of harmonic representatives.
We point out that the cocycles make sense at every complex parameter $q\in (\C^*)^2$, while the harmonic
forms in this construction are used only at finite-order parameters $q\in\Ttor$.

\section{Intersection pairing and a topological constraint}
\label{sec:intersection}

Keep the notations of \Cref{sec:forms}. In particular, $\Gamma$ is a  torsion-free discrete cocompact subgroup of $\PSL_{2}(\R)$, and as in \S\ref{subsec:family}, every torsion point $q\in\Ttor$ determines a character $\chi_{q}:\Gamma\rightarrow S^1$ with finite image, a finite index normal subgroup $N_{q}=\ker \chi_{q} \unlhd \Gamma$, whence a finite cover $S_{N_{q}}=N_{q}\backslash \D$ of $S=\Gamma \backslash \D$. The $\chi_{q}$-equivariant cohomology class $H^1_{\mathrm{dR}}(S_{N_q};\C)_{\chi_q}$ has been parametrized by $V=\C^{2g-2}$ and for $v\in V$, we write $\omega_{q,v}\in \Omega^1_{\mathrm{cl}}(S_{N_{q}}; \C)$ the associated harmonic representative.

\subsection{Intersection pairing}

For $q\in\Ttor$, $u,v\in V$, observe that both forms $\omega_{q,u}$, $\omega_{q^{-1},v}$ are  one-forms on the same surface $S_{N_{q}}$.  Define the complex-bilinear pairing 
\begin{equation}
\label{eq:bilinear-pairing}
 B_q(u,v)=\frac1{d_q}\int_{S_{N_q}}
                         \omega_{q,u}\wedge\omega_{q^{-1},v}.
\end{equation}
Although we consider here the harmonic representatives for ease of notation, the integral does not change when adding exact forms.

The goal for this subsection is to show the following explicit formula for $B_{q}$.

\begin{proposition}
\label{prop:matrix}
One has $B_q(u,v)=u^{\mathsf T}M(q)v$, where
\begin{equation}
\label{eq:matrix}
 M(q)=M_0(z,w)\oplus J_0^{\oplus(g-2)},\qquad
 J_0=\begin{pmatrix}0&1\\-1&0\end{pmatrix},
\end{equation}
and
\begin{equation}
\label{eq:matrix0}
 M_0(z,w)=\frac12\begin{pmatrix}
 w-w^{-1}&(wz+w-z+1)/z\\[2pt]
 -(wz-w+z+1)/w&-(z-z^{-1})
 \end{pmatrix}.
\end{equation}
For all complex parameters,
\begin{equation}
\label{eq:matrix-identities}
                   \det M(q)=1,\qquad M(q^{-1})=-M(q)^{\mathsf T}.
\end{equation}
\end{proposition}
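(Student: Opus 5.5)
The plan is the twisted analogue of the Riemann bilinear relations: reduce $B_q$ to a boundary integral over a fundamental $4g$-gon of $\Gamma$, then read off the coefficients from the generator values \eqref{eq:generator-values}.

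\textbf{Step 1: reduce to one fundamental domain.} Fix $q\in\Ttor$. Write $\omega=\omega_{q,u}$ and $\eta=\omega_{q^{-1},v}$, so $\omega$ is $\chi_q$-equivariant and $\eta$ is $\chi_q^{-1}$-equivariant. Then $\omega\wedge\eta$ is $D$-invariant, since $\delta^*(\omega\wedge\eta)=\chi_q(\delta)\chi_q(\delta)^{-1}\,\omega\wedge\eta$. Let $P\subset\D$ be a fundamental $4g$-gon for $\Gamma$ adapted to the presentation \eqref{eq:presentation}. Then $P$ lifts to a fundamental domain for $D$ acting on $S_{N_q}$, and
\[
 B_q(u,v)=\int_P\widetilde\omega\wedge\widetilde\eta .
\]

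\textbf{Step 2: apply Stokes and pair the sides.} On $\D$ write $\widetilde\omega=df_\omega$, so that $\int_P df_\omega\wedge\widetilde\eta=\int_{\partial P}f_\omega\,\widetilde\eta$. The sides of $P$ come in pairs $s$ and $\gamma s$, with $\gamma$ running over the generators $a_j,b_j$ (or the appropriate conjugates dictated by the gluing pattern). The two sides of a pair carry opposite boundary orientations. On $\gamma s$ we use two facts:
\begin{itemize}
\item $f_\omega(\gamma x)=\sigma_{q,u}(\gamma)+\chi_q(\gamma)f_\omega(x)$;
\item $\gamma^*\widetilde\eta=\chi_q(\gamma)^{-1}\widetilde\eta$.
\end{itemize}
Hence the terms involving $f_\omega$ cancel, and each pair contributes
\[
 -\sigma_{q,u}(\gamma)\,\chi_q(\gamma)^{-1}\int_s\widetilde\eta .
\]
The integral $\int_s\widetilde\eta$ along a side between two vertices of $P$ is a difference of values of $f_\eta$ at vertices. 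Since all vertices are $\Gamma$-translates of one vertex, it equals a value of the cocycle $\sigma_{q^{-1},v}$ at an explicit word in the generators, up to coboundary terms.

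This expresses $B_q(u,v)$ as a bilinear form in the generator values $\sigma_{q,u}(a_j),\sigma_{q,u}(b_j)$ and $\sigma_{q^{-1},v}(a_j),\sigma_{q^{-1},v}(b_j)$, with Laurent-polynomial coefficients in $z,w$. Coboundary contributions must drop out, because $B_q$ only depends on classes. Concretely, I would compute the contribution of each handle $[a_j,b_j]$ using the vertex sequence $o,\ a_jo,\ a_jb_jo,\ a_jb_ja_j^{-1}o,\dots$, as in the classical proof of the bilinear relations.

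\textbf{Step 3: substitute the generator values.} For $j\ge3$ the character is trivial on $a_j,b_j$, and the handle contributes $x_jy'_j-y_jx'_j$, which is the block $J_0$. For handles $1$ and $2$, substitute $\chi_q(a_1)=z$, $\chi_q(b_1)=w$, $\chi_q(a_2)=-1$, $\chi_q(b_2)=1$, together with $x_2=0$ and the formula for $y_2$ from \eqref{eq:generator-values}. On the $\chi_{q^{-1}}$ side, replace $z,w$ by $z^{-1},w^{-1}$. Collecting coefficients of $x_1x'_1$, $x_1y'_1$, $y_1x'_1$ and $y_1y'_1$ should produce $M_0(z,w)$; the factor $\tfrac12$ in $M_0$ is the one coming from $y_2$.

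\textbf{Step 4: the identities \eqref{eq:matrix-identities}.} These are finite algebraic checks valid for all complex parameters, which I would verify directly.
\begin{itemize}
\item Since $\det J_0=1$, it suffices to show $\det M_0=1$. Expanding $\tfrac14[-(w-w^{-1})(z-z^{-1})+(wz+w-z+1)(wz-w+z+1)/(zw)]$ gives $1$.
\item For $M_0(z^{-1},w^{-1})=-M_0(z,w)^{\mathsf T}$, the diagonal entries are odd under inversion. For the off-diagonal entries, one checks that $(w^{-1}z^{-1}+w^{-1}-z^{-1}+1)z=(wz-w+z+1)/w$.
\end{itemize}
As a consistency check, the second identity is the algebraic shadow of $\omega\wedge\eta=-\eta\wedge\omega$ at torsion points.

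\textbf{Main obstacle.} The difficulty is the bookkeeping in Step 2. One has to match each side of $P$ with the correct side-pairing element and its orientation, and track the twisting factors $\chi_q(\cdot)^{\pm1}$ accumulated along the vertex path. Any slip changes $M_0$ by transposition or by an inversion $z\leftrightarrow z^{-1}$. I would first carry out the computation for a single handle with a general character, obtaining a universal $2\times2$ handle formula. I would then assemble the handles, noting that prefix characters equal $1$ because commutators lie in $\ker\chi_q$. Finally I would double-check the result against $M(q^{-1})=-M(q)^{\mathsf T}$.
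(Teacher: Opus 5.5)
Your route is sound and genuinely different from the paper's. However, your plan for assembling the handles, as stated, would produce the wrong matrix.

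\textbf{Comparison of routes.} You apply Stokes to $f_\omega\widetilde\eta$ on $\partial P$ and remove the unknown boundary values of $f_\omega$ by side pairing, which is a twisted Riemann bilinear relation. The paper instead replaces $\omega_{q,u},\omega_{q^{-1},v}$ by piecewise-affine forms $dF,dG$ on a triangulation of $P$. Their primitives interpolate the vertex values $f_k=\sigma_{q,u}(p_k)$ and $g_k=\sigma_{q^{-1},v}(p_k)$. These forms are cohomologous to the harmonic ones on $S_{N_q}$ because the parametrized and true period cocycles differ by a coboundary, and coboundaries vanish on $N_q$. Then $\int_{\partial P}F\,dG$ is evaluated edge by edge with the trapezoid rule, giving the shoelace sum $\frac12\sum_k(f_kg_{k+1}-f_{k+1}g_k)$.

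This buys the paper a mechanical computation with no side-pairing elements and no separate coboundary argument. The second handle contributes $0$ there, since the pairs $(f_k,g_k)$ are proportional, and the $\frac12$ is the trapezoid factor. Your route avoids the piecewise-smooth Stokes argument, but two points need care.
\begin{itemize}
\item Base the primitives at a vertex $x_0$ of $P$.
\item Justify coboundary invariance. Adding $dh$, with $h$ $\chi_q$-equivariant and $h(x_0)=c$, shifts the $x_0$-based period cocycle by $(\chi_q-1)c$ without changing the integral. So your bilinear expression may be evaluated on $\sigma_{q,u},\sigma_{q^{-1},v}$.
\end{itemize}

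\textbf{The assembly step needs fixing.} Handles are not independent merely because prefix characters equal $1$. The true side pairings are $\pi aba^{-1}\pi^{-1}$ (sides $k\leftrightarrow k+2$) and $\pi[a,b]a^{-1}\pi^{-1}$ (sides $k+1\leftrightarrow k+3$), where $\pi$ is the prefix. When $\chi_q(\pi)=1$, translating a handle's four sides by $\pi$ changes $\int f_\omega\widetilde\eta$ by $\sigma(\pi)\,\tau([a,b])$, where $\tau$ is the $\chi_{q^{-1}}$-cocycle.

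For $j\ge 3$ this term vanishes. For $j=2$ it does not:
\begin{itemize}
\item $\sigma([a_1,b_1])=D_0=(1-w)x+(z-1)y$;
\item $\tau([a_2,b_2])=-2Y_2=-E_0$, where $E_0=(1-w^{-1})X+(z^{-1}-1)Y$.
\end{itemize}
Computing handle $2$ from the vertex sequence $o,a_2o,a_2b_2o,\dots$, as you propose, gives $+\frac12 D_0E_0$. The correct contribution is $-\frac12D_0E_0$.

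With the correct pairings, handle $1$ contributes
\[
(1-w^{-1})xX+(z^{-1}+w-1)xY-zw^{-1}yX+(1-z)yY.
\]
Adding $-\frac12 D_0E_0$ gives exactly $u^{\mathsf T}M_0v$, so your method does work once the prefix term is included. In your framework the $\frac12$ indeed comes from $y_2,Y_2$. Your check $M(q^{-1})=-M(q)^{\mathsf T}$ would have caught the naive sign. The verifications in your Step 4 are correct.
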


To prove \Cref{prop:matrix}, we perform a first computation of the pairing by triangulating a polygonal fundamental domain for $\Gamma$.

Recall \eqref{eq:presentation}. Write $\prod_j[a_j,b_j]=s_1\cdots s_{4g}$, i.e. 
$s_1=a_1$, $s_2=b_1$, $s_3=a_1^{-1}$, $s_4=b_1^{-1}$ and so on. Set
$p_0=1$, $p_k=s_1\cdots s_k$. Put
$f_k=\sigma_{q,u}(p_k)$ and $g_k=\sigma_{q^{-1},v}(p_k)$.
Thus $f_0=g_0=f_{4g}=g_{4g}=0$.

\begin{lemma}
\label{lem:polygon}
For  $q\in \Ttor$, $u,v\in V$,
\begin{equation}
\label{eq:polygon}
                  B_q(u,v)=\frac12\sum_{k=0}^{4g-1}
                              (f_kg_{k+1}-f_{k+1}g_k).
\end{equation}
In particular, the right side extends $B_q$ to a Laurent-polynomial
bilinear form at every $q\in(\C^*)^2$.
\end{lemma}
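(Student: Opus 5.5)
Let $P\subset\D$ be the closed hyperbolic $4g$-gon with vertices $P_k=p_k o$ for $0\le k\le 4g-1$, chosen as a fundamental domain whose side pairings realize the presentation \eqref{eq:presentation}. This is possible after choosing $o$ as a vertex of the standard fundamental polygon. We unfold the integral over $S_{N_q}$ onto $\D$. The surface $S_{N_q}$ is tiled by the $d_q$ translates $\delta P$, $\delta\in D=\Gamma/N_q$. On a translate $\gamma P$, the pullback of $\omega_{q,u}\wedge\omega_{q^{-1},v}$ by $\gamma$ equals $\chi_q(\gamma)\chi_{q^{-1}}(\gamma)$ times the form itself, and this factor is $1$ because $\chi_q$ is unitary. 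Each tile therefore contributes the same amount. This gives
\[
B_q(u,v)=\int_P \widetilde\omega_{q,u}\wedge\widetilde\omega_{q^{-1},v}.
\]

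On $\D$ we write $\widetilde\omega_{q,u}=df$ with $f=f_{\omega_{q,u}}$, and $\widetilde\omega_{q^{-1},v}=dh$. Then $df\wedge dh=d(f\,dh)$, and Stokes gives $B_q=\int_{\partial P}f\,dh$. To make the result manifestly antisymmetric, we instead use $\tfrac12\int_{\partial P}(f\,dh-h\,df)$. This is legitimate because $\int_{\partial P} d(fh)=0$, so the two expressions agree.

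The boundary $\partial P$ splits into the oriented edges $e_k=[P_k,P_{k+1}]$. Since $e_k=p_k[o,s_{k+1}o]$, the equivariance relations give, along $e_k$,
\[
f(p_kx)=f_k+\chi_q(p_k)f(x),\qquad h(p_kx)=g_k+\chi_{q^{-1}}(p_k)h(x).
\]
The key step is to pair each edge with its partner edge under the side identification (the edge labelled $s$ with the one labelled $s^{-1}$). We substitute these affine relations and use $\chi_q\chi_{q^{-1}}=1$. Along each matched pair, the terms $\chi\bar\chi\, f\,dh$ then cancel, because the two edges are traversed in opposite directions. What survives is the constant shifts times integrals of exact forms, which evaluate to endpoint values.

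A cleaner route is to bypass the edge bookkeeping with a homotopy argument. The integrand is closed and the integral only depends on boundary data, so we may replace the harmonic forms by any equivariant closed forms with the same cocycles. In particular, we may take $f$ and $h$ to be affine-like functions on the triangulation of $P$ into triangles $(o,P_k,P_{k+1})$. We pick closed representatives whose primitives are piecewise linear in barycentric coordinates, interpolating the vertex values $f(P_k)=f_k$ and $h(P_k)=g_k$. This is possible by an equivariant piecewise-linear construction on the $\Gamma$-invariant triangulation, and it leaves $B_q$ unchanged because the integral is invariant under adding exact forms. On each triangle $\Delta_k$ the integral of $df\wedge dh$ for linear interpolants is the standard shoelace term $\tfrac12(f_kg_{k+1}-f_{k+1}g_k)$, using $f_0=g_0=0$ at the cone point $o=P_0$. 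Summing over $k$ then yields \eqref{eq:polygon}.

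The main obstacle is justifying this PL replacement: showing that an equivariant closed (Lipschitz) form with prescribed cocycle exists, with primitive values $f_k$ at the vertices, and that Stokes applies to it. Equivalently, one can do the direct boundary cancellation carefully with orientations, which I would fall back on.

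The final claim is immediate from the formula: $f_k$ and $g_k$ are linear in $u$ and $v$ respectively, with Laurent-polynomial coefficients by \cref{lem:word-coordinates}. The right side of \eqref{eq:polygon} therefore extends $B_q$ to a Laurent-polynomial bilinear form at every $q\in(\C^*)^2$.
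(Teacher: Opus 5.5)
Your second (``cleaner'') route is essentially the paper's proof. The step you flag as the main obstacle is handled there exactly as you suggest. Write $\sigma=\sigma_{q,u}$, $\tau=\sigma_{q^{-1},v}$, $\alpha=\omega_{q,u}$, $\beta=\omega_{q^{-1},v}$. On each triangle of the $\Gamma$-translated coned triangulation, one defines $F,G$ by barycentric interpolation of the values $\sigma(\gamma_j)$ and $\tau(\gamma_j)$ at its vertices $\gamma_jx_0$; equivariance follows from $\sum_j\lambda_j=1$ and the cocycle identity. One then checks that $h=F-F_0$ and $k=G-G_0$ are $N$-invariant for smooth primitives $F_0,G_0$ of the lifts of $\alpha,\beta$. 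Note that the cocycles only need to agree with those of the harmonic forms on $N$, not on all of $\Gamma$ as your phrase ``same cocycles'' suggests. The difference is then removed by Stokes triangle by triangle, using $(\alpha+dh)\wedge(\beta+dk)-\alpha\wedge\beta=d\bigl(h(\beta+dk)-k\alpha\bigr)$. The only cosmetic difference is that the paper evaluates $\int_P dF\wedge dG$ as $\int_{\partial P}F\,dG$ edge by edge, whereas you sum per-triangle shoelace terms; both give the same expression.
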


\begin{proof}
Write $N=N_q$, $\chi=\chi_q$, $\sigma=\sigma_{q,u}$ and
$\tau=\sigma_{q^{-1},v}$, and put
$\alpha=\omega_{q,u}$, $\beta=\omega_{q^{-1},v}$. The idea is to replace $\alpha$ and $\beta$ by  suitable cohomologous one-forms which allow an explicit computation.

Let $P\subset\D$ be a lifted convex geodesic polygon for \eqref{eq:presentation},
whose positively oriented boundary has successive vertices $p_kx_0$ for some $x_{0}\in \D$. Triangulate $P$ by drawing geodesic segments from $x_0$ to the other vertices of $P$. For each obtained triangle $T$, define \[
 \varphi_T:\Delta\longrightarrow T,\qquad
 \Delta=\{(s,t)\in\R^2:s,t\ge0,\ s+t\le1\},
\]
the geodesic ray parametrization, respecting orientation and sending $0$ to the base vertex $x_{0}$. More precisely, 
$ \varphi_T|_{\partial \Delta}$ is the constant speed geodesic parametrization and is also geodesic on each ray starting from $x_{0}$.
By translating (freely) this polygon and the 
triangulation therein by $\Gamma$, one obtains a triangulation
$\mathcal T$ of $\D$. For $T\in \mathcal T$, we extend the notation $ \varphi_T$ so that $\varphi_{\gamma T}=\gamma\circ\varphi_T$ $(\gamma\in \Gamma)$. It is well defined because $\Gamma$ acts freely on the set of triangles.

 Given $T\in \mathcal T$, we define $F,G:T\rightarrow \C$ by interpolating between the values of $\sigma$ and $\tau$ on the vertices. More formally, if $T$ has ordered vertices
$(\gamma_0x_0,\gamma_1x_0,\gamma_2x_0)=(\varphi_T(0,0), \varphi_T(1,0), \varphi_T(0,1))$, define for $(s,t)\in \Delta$,
\[
 F\circ\varphi_T (s,t)=\sum_{j=0}^2\lambda_j\sigma(\gamma_j),
 \qquad
 G\circ\varphi_T(s,t)=\sum_{j=0}^2\lambda_j\tau(\gamma_j),
 \qquad
 (\lambda_0,\lambda_1,\lambda_2)=(1-s-t,s,t).
\]
As $T$ varies, these definitions agree on shared edges, so we can see $F,G:\D\rightarrow \C$. Since $\sum_j\lambda_j=1$,
the cocycle identities give
\[
 F\circ\gamma=\sigma(\gamma)+\chi(\gamma)F,\qquad
 G\circ\gamma=\tau(\gamma)+\chi(\gamma)^{-1}G.
\]
Thus $dF,dG$ descend to piecewise smooth forms $\eta,\zeta$ on $S_N$.

Let $F_0,G_0$ be smooth primitives on $\D$ of the lifts of
$\alpha,\beta$. Since
$p_\alpha=\sigma|_N$ and $p_\beta=\tau|_N$, the functions
\[
 h=F-F_0,\qquad k=G-G_0
\]
are $N$-invariant and descend to continuous piecewise smooth functions
on $S_N$. Hence $\eta=\alpha+dh$, $\zeta=\beta+dk$, and, on each triangle,
\[
 \eta\wedge\zeta-\alpha\wedge\beta=d(h\zeta-k\alpha).
\]
The tangential traces of $h\zeta-k\alpha$ agree on shared edges, so
Stokes' theorem gives
\[
 \int_{S_N}(\eta\wedge\zeta-\alpha\wedge\beta)
 =\sum_{T\in N\backslash\mathcal T}
       \int_{\partial T}(h\zeta-k\alpha)=0.
\]

Moreover, $\gamma^*(dF\wedge dG)=dF\wedge dG$ for every
$\gamma\in\Gamma$, and $S_N$ consists of $d_q$ copies of $P$.
On the $k$th positively oriented boundary edge, the functions $F,G$
are affine with endpoint values $(f_k,f_{k+1})$ and $(g_k,g_{k+1})$.
Consequently,
\[
 \begin{aligned}
 B_q(u,v)
 &=\frac1{d_q}\int_{S_N}\eta\wedge\zeta
   =\int_P dF\wedge dG
   =\int_{\partial P}F\,dG\\
 &=\frac12\sum_{k=0}^{4g-1}
       (f_k+f_{k+1})(g_{k+1}-g_k)\\
 &=\frac12\sum_{k=0}^{4g-1}
       (f_kg_{k+1}-f_{k+1}g_k),
 \end{aligned}
\]
where the last equality uses
$\sum_k(f_{k+1}g_{k+1}-f_kg_k)=0$.
The Laurent-polynomial assertion follows from
\Cref{lem:word-coordinates}.
\end{proof}

We may now derive the formula announced in \Cref{prop:matrix}.

\begin{proof}[Proof of \Cref{prop:matrix}] We compute the formula for $B_{q}(u,v)$ by writing explicitely the values for the $f_{k},g_{k}$ and applying \Cref{lem:polygon}.

Denote by $x,y$  the first two coordinates of $u$, and
$D_0=(1-w)x+(z-1)y$. Reading the first eight sides gives the primitive
values
\[
\begin{array}{c|rrrrrrrrr}
k&0&1&2&3&4&5&6&7&8\\ \hline
f_k&0&x&x+zy&(1-w)x+zy&D_0&D_0&D_0/2&D_0/2&0.
\end{array}
\]
For $g_k$, replace $(z,w,x,y)$ by $(z^{-1},w^{-1},X,Y)$, with
$X,Y$ the first coordinates of $v$. On the second handle the pairs
$(f_k,g_k)$ are proportional, so its determinants vanish.
For the first handle, the coefficients of the three nonzero
determinants $f_kg_{k+1}-f_{k+1}g_k$ are
\[
\begin{array}{c|rrrr}
 &xX&xY&yX&yY\\ \hline
k=1&0&z^{-1}&-z&0\\
k=2&w-w^{-1}&wz^{-1}&-zw^{-1}&0\\
k=3&0&w-1&1-w^{-1}&z^{-1}-z.
\end{array}
\]
Adding and dividing by two gives $M_0$.
For each later handle the primitive values are
$0,x_j,x_j+y_j,y_j,0$, giving $x_jY_j-y_jX_j$.
All these handles start at primitive value zero and character one, so
there are no cross terms between handles.

The numerator of $\det M_0$, over the denominator $4zw$, is
\[
 -(w^2-1)(z^2-1)+(wz+w-z+1)(wz-w+z+1)=4zw.
\]
This proves the determinant identity. Interchanging the two primitive
lists in \eqref{eq:polygon} negates the sum and interchanges $q$ with
$q^{-1}$, proving the second identity.
\end{proof}

\subsection{Non-zero degree}
\label{subsec:nz}

For $q\in \T^2=S^1\times S^1$, recall the matrix $M(q)$ from \Cref{prop:matrix}. We have $\overline{M(q)}=M(q^{-1})$, hence $M(q)^*=-M(q)$.
Thus $M(q)$ is skew-Hermitian (with our convention on Hermitian products). Put
\begin{equation}
\label{eq:H}
                       H_q=iM(q^{-1}).
\end{equation}
It is Hermitian, and, at finite-order parameters,
\begin{equation}
\label{eq:hermitian-intersection}
 u^*H_qv=\frac{i}{d_q}\int_{S_{N_q}}
                                      \overline{\omega_{q,u}}\wedge\omega_{q,v}.
\end{equation}
Each $2\times2$ block of $H_q$ has determinant $-1$, so its signature
is $(g-1,g-1)$. Write $E_q^+$ for its positive spectral subspace, i.e. the subspace of $V$ spanned by the eigenspaces of $H_{q}$ with positive eigenvalue.
Note that $E_q^+$ has rank $g-1$ and varies continuously with $q$, since none of its eigenvalues can pass through zero.

For a $p$-dimensional space $E\subseteq V$, define its top exterior power
$\det E:=\bigwedge^p E$. Note it is a line in $\bigwedge^p  V$.  The goal of this subsection is to show the following.

\begin{proposition}
\label{prop:topology}
Every continuous section of the line bundle $q\mapsto\det E_q^+$ on $\T^2$ must vanish for some $q\in \T^2$.
\end{proposition}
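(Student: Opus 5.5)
The plan is to reduce to a $2\times2$ problem and compute a Chern number. By \Cref{prop:matrix}, $H_q=iM(q^{-1})$ is block diagonal: one block is $H^0_q:=iM_0(z^{-1},w^{-1})$, and the others are $g-2$ copies of the constant matrix $iJ_0$, each of signature $(1,1)$. Hence $E_q^+=L_q\oplus E^+_{\mathrm{const}}$. Here $L_q\subset\C^2$ is the positive eigenline of $H^0_q$, and $E^+_{\mathrm{const}}$ is a fixed $(g-2)$-dimensional subspace. Consequently $\det E_q^+\cong L_q\otimes\det E^+_{\mathrm{const}}$ is isomorphic to $L_q$ as a complex line bundle over $\T^2$. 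It therefore suffices to show that $L\to\T^2$ has nonzero first Chern number. A line bundle with a nowhere-vanishing continuous section is trivial, so that conclusion gives the proposition.

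To compute $c_1(L)$, I write $H^0_q=a(q)\Id+h_1(q)\sigma_1+h_2(q)\sigma_2+h_3(q)\sigma_3$, with $a,h_j$ real and continuous in $q\in\T^2$ and $\sigma_j$ the Pauli matrices. Since $\det H^0_q=-1$ (from $\det M_0=1$), we get $a^2-|h|^2=-1$. So $|h|\ge1$ never vanishes, the two eigenvalues have opposite signs, and $L_q$ is exactly the top eigenline of $\hat h(q)\cdot\sigma$, where $\hat h=h/|h|:\T^2\to S^2$. The top eigenline bundle of $\hat h\cdot\sigma$ over $S^2$ is the tautological (Hopf-type) bundle of Chern number $\pm1$. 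Hence $c_1(L)=\pm\deg\hat h$, and it remains to prove $\deg\hat h\neq0$.

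For the degree I count preimages of a pole. The poles $\hat h=(0,0,\pm1)$ correspond to the vanishing of the off-diagonal entry of $H^0_q$. That entry is a multiple of $w^{-1}(1+z)-(1-z)$, so it vanishes exactly when $w^{-1}=(1-z)/(1+z)$. For $|z|=1$ this ratio is purely imaginary, so its modulus is $1$ only when $z=\pm i$. This gives exactly two solutions on $\T^2$: $(z,w)=(i,i)$ and $(-i,-i)$. The diagonal entries of $H^0_q$ are $\pm\frac{i}{2}(w^{-1}-w)$-type expressions, namely real sines of the arguments. Evaluating them, I expect $h_3$ to take opposite signs at the two points, so each pole has exactly one preimage.

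The main obstacle is checking that these preimages are regular. That is, I must verify that the Jacobian of $(\operatorname{Re},\operatorname{Im})$ of the off-diagonal entry, as a function of the two angles, is nonzero at $(i,i)$ (or at $(-i,-i)$). This is a direct computation: the map $(\theta,\phi)\mapsto w^{-1}(1+z)-(1-z)$ has derivative columns $(iz)(w^{-1}+1)$ and $-iw^{-1}(1+z)$, and I will check that these are $\R$-linearly independent there. Once one pole has a single regular preimage, $\deg\hat h=\pm1$, so $c_1(L)=\pm1\ne0$ and every section of $\det E_q^+$ must vanish somewhere.
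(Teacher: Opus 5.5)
Your proposal is correct and follows essentially the same route as the paper. You reduce to the eigenline $L_q$ of the $2\times 2$ block, write it as the top eigenline of $\hat h\cdot\sigma$, and get $\deg\hat h=\pm1$ by counting preimages of a pole at $(i,i)$ and $(-i,-i)$; your planned regularity check does go through, since at $(i,i)$ the two derivative columns are $-1+i$ and $-1-i$. The only difference is cosmetic: you conclude via the first Chern number of the pulled-back tautological bundle, whereas the paper notes that a nowhere-zero section would lift the sphere map through the Hopf map to $S^3$, forcing degree zero because $H_2(S^3;\Z)=0$.
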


 Let $L_q\in \mathbb P(\C^2)$ be the positive eigenline of
$iM_0(q^{-1})$. Let $W:=\C (1,-i)$ be the positive eigenline of $iJ_{0}$ (indeed $iJ_0(1,-i)=(1,-i)$). 
The subspace $E_{q}^+$ has the following form:
\[
 E_q^+=L_q\oplus W^{\oplus g-2}.
\]
 Note that only the first component $L_{q}$ depends on $q$. Hence, a nowhere-zero section of $\det E^+_{q}$ would give one for $L_{q}$. We therefore focus on $L_{q}$.

We identify the space $\mathbb P(\C^2)$ of complex lines with $S^2$
by sending the line of  unit vector $u=(u_1,u_2)$ to its Hopf parametrization
\begin{equation}
\label{eq:hopf}
 \mathcal H(u)=
 (2\operatorname{Re}(\overline u_1u_2),
  2\operatorname{Im}(\overline u_1u_2),|u_1|^2-|u_2|^2).
\end{equation}
We orient $\T^2$ by $d\theta\wedge d\phi$, where
$z=e^{i\theta}$ and $w=e^{i\phi}$, and give $S^2$ its outward orientation.
Recall the degree of a smooth map between closed oriented surfaces is the
signed count of preimages of a regular value
\cite[Sections~2.2 and 3.3]{Hatcher}.

\begin{lemma}
\label{lem:degree}
Under these conventions, the map $q\mapsto L_q$ from $\T^2$ to $S^2$ has degree $-1$.
\end{lemma}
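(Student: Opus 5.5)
The plan is to make the eigenline map completely explicit and then read off its degree by counting the preimages of one regular value, with orientation signs.

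\textbf{Step 1: Pauli decomposition.} Restricted to the torus, $q^{-1}=\overline q$, and substituting into \eqref{eq:matrix0} gives the Hermitian matrix
\[
 iM_0(q^{-1})=\begin{pmatrix}\sin\phi & h(q)\\ \overline{h(q)} & -\sin\theta\end{pmatrix},
 \qquad
 h(q)=\tfrac i2\bigl(\overline w+z\overline w-1+z\bigr).
\]
This is $a(q)\Id+n_1\sigma_x+n_2\sigma_y+n_3\sigma_z$, with
\[
 a=\tfrac12(\sin\phi-\sin\theta),\qquad n_3=\tfrac12(\sin\phi+\sin\theta),\qquad h=n_1-in_2 .
\]
The determinant identity $\det M_0=1$ gives $a^2-|n|^2=-1$, so $|n|\ge1$ and $n(q)=(n_1,n_2,n_3)$ never vanishes.

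\textbf{Step 2: identify $L_q$ with $n/|n|$.} For a unit vector $u$, the expectations of $\sigma_x,\sigma_y,\sigma_z$ are exactly the three components of $\mathcal H(u)$ in \eqref{eq:hopf}. For example, $u^*\sigma_yu=-i\overline u_1u_2+i\overline u_2u_1=2\operatorname{Im}(\overline u_1u_2)$. Hence the positive eigenline of $n\cdot\sigma$ has Hopf image $n/|n|$, so the map $q\mapsto L_q$ is the Gauss-type map $q\mapsto n(q)/|n(q)|$, and its degree is that of $n/|n|:\T^2\to S^2$.

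\textbf{Step 3: count preimages.} I would choose a convenient regular value, for instance the north pole $(0,0,1)$ or the south pole. A preimage is a point with $h(q)=0$ and $n_3>0$ (respectively $n_3<0$). The equation $h=0$ reads
\[
 \overline w(1+z)=1-z,\qquad\text{equivalently}\qquad w=\frac{1+z}{1-z}.
\]
Since $|w|=1$, this forces $|1+z|=|1-z|$, so $z=\pm i$. Then $w=\pm i$, giving finitely many explicit candidate points. I keep those with the correct sign of $n_3$ and compute, at each, the sign of the Jacobian of $(n_1,n_2)$ in the coordinates $(\theta,\phi)$. The local orientation of $S^2$ near the north pole, as seen through $(n_1,n_2)$, agrees with the outward orientation, and is reversed near the south pole. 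The signed sum of these Jacobian signs should be $-1$. As a sanity check I would do the count at both poles, since they must give the same answer. Alternatively, I could compute $\frac1{4\pi}\int_{\T^2}\hat n\cdot(\partial_\theta\hat n\times\partial_\phi\hat n)\,d\theta\, d\phi$, but the preimage count is simpler.

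\textbf{Main obstacle.} The content is elementary; the real difficulty is sign bookkeeping. This includes the conjugate-linear convention, the sign relating $\operatorname{Im}h$ to $n_2$, the chosen orientation $d\theta\wedge d\phi$, and the orientation of $S^2$ in $(n_1,n_2)$-coordinates near each pole. I would verify the sign at one preimage by a first-order expansion of $h$ around that point, writing $h\approx c_1\delta\theta+c_2\delta\phi$ with $c_1,c_2\in\C$, and taking the sign of the real determinant built from $(\operatorname{Re}c_j,-\operatorname{Im}c_j)$. Only the nonvanishing of the degree is used in \Cref{prop:topology}, so an error in the overall sign would be harmless.
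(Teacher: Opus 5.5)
Your proposal is correct and follows essentially the paper's route: Pauli decomposition, $a^2-|n|^2=-1$ from $\det M_0=1$, the Hopf identification, and a signed count of north-pole preimages found by setting the off-diagonal entry to zero. The one difference is that you work with $iM_0(q^{-1})$ directly, whereas the paper computes for $iM_0(q)$ and then composes with the orientation-preserving inversion $(\theta,\phi)\mapsto(-\theta,-\phi)$. The step you left as ``should be $-1$'' does check out: the unique north-pole preimage is $(i,i)$, where $a=0$ and $|n|=1$, and there $\partial(n_1,n_2)/\partial(\theta,\phi)=\left(\begin{smallmatrix}-1/2&1/2\\1/2&1/2\end{smallmatrix}\right)$ has determinant $-1/2$, so the degree is indeed $-1$.
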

\begin{proof}
First consider $iM_0(q)$,
with $z=e^{i\theta}$, $w=e^{i\phi}$. Using the matrices
\[
 \sigma_x=\begin{pmatrix}0&1\\1&0\end{pmatrix},\quad
 \sigma_y=\begin{pmatrix}0&-i\\i&0\end{pmatrix},\quad
 \sigma_z=\begin{pmatrix}1&0\\0&-1\end{pmatrix},
\]
write $iM_0=h_0\Id+h_x\sigma_x+h_y\sigma_y+h_z\sigma_z$ where $h_{0}, h_{x}, h_{y}, h_{z}\in \R$. Direct expansion
gives
\begin{align}
 h_x&=\tfrac12[-\sin\phi+\sin\theta-\sin(\phi-\theta)],\label{eq:hx}\\
 h_y&=\tfrac12[1-\cos\phi-\cos\theta-\cos(\phi-\theta)],\label{eq:hy}\\
 h_z&=-\tfrac12(\sin\phi+\sin\theta),\qquad
 h_0=\tfrac12(-\sin\phi+\sin\theta).\label{eq:hz}
\end{align}
For $h=(h_x,h_y,h_z)$, the determinant identity says
$h_0^2-|h|^2=-1$. Hence $h\ne0$, and the projection onto the positive
line is $(\Id+n\cdot\sigma)/2$, where $n=h/|h|$.

Indeed, $uu^*=(\Id+\mathcal H(u)\cdot\sigma)/2$, so $n$ represents
the positive line under \eqref{eq:hopf}.

To find the preimages of the north pole, the off-diagonal entry must vanish:
\[
                          w(z+1)=z-1.
\]
There is no solution with $z=-1$, and otherwise $|w|=1$ forces
$|z-1|=|z+1|$, hence $\operatorname{Re}z=0$.
The only possibilities are $(i,i)$ and $(-i,-i)$.
The value of $h_z$ is respectively $-1$ and $1$, so only $(-i,-i)$
maps to the north pole. At its angular coordinates
$(3\pi/2,3\pi/2)$,
\[
 \frac{\partial(h_x,h_y)}{\partial(\theta,\phi)}
   =\begin{pmatrix}1/2&-1/2\\-1/2&-1/2\end{pmatrix},
                         \qquad\det=-1/2.
\]
Here $h=(0,0,1)$, so normalization does not change the tangential
derivative. The north pole is a regular value with one negative preimage;
therefore $\deg n=-1$. The family $L_q$ uses $q^{-1}$, so its sphere
map is $n(-\theta,-\phi)$. Inversion of both torus coordinates preserves
orientation, and this map also has degree $-1$.

\end{proof}

\begin{proof}[Proof of \Cref{prop:topology}]
A nowhere-zero section of $\det E_q^+$ would give one of $L_q$.
Normalized to length one, it would give a lift of its sphere map to $S^3\subset\C^2$ through
$\mathcal H$. Such a lift forces degree zero: the induced map on second
homology would factor through $H_2(S^3;\Z)=0$, whereas
$H_2(\T^2;\Z)$ and $H_2(S^2;\Z)$ are generated by their fundamental
classes \cite[Sections~2.1--2.2]{Hatcher}. This contradicts \cref{lem:degree}.
\end{proof}

For the remainder of the proof, the strategy is to assume $\nu\not \perp \leb$ and construct a
nowhere-zero section of $\det E_q^+$, contradicting  \Cref{prop:topology}.
The link between the random walk and the intersection pairing is provided
by winding variances.

\section{Variance for winding statistics}
\label{sec:winding}

We return to the notation $N\lhd\Gamma$, $S_N=N\backslash\D$,
$D=\Gamma/N$, and $d=|D|$ from \cref{sec:forms}.
For a closed one-form $\omega\in \Omega^1_{\mathrm{cl}}(S_N;\C)$,  we recall its normalized primitive  $ f_\omega:\D\rightarrow \C$ is
\[
 f_\omega(x)=\int_{[o,x]}\widetilde\omega,\qquad x\in\D,
\]
where $\widetilde\omega$ is the lift of $\omega$ to $\D$. It satisfies the relation $f_\omega(nx)=p_\omega(n)+f_\omega(x)$ for $n\in N$.
Compactness of $S_N$ makes $f_\omega$ Lipschitz.
For $\xi\in\partial\D$, let $r_\xi:[0,\infty)\to\D$ be the
unit-speed geodesic ray from $o$ to $\xi$. Its winding in the direction
$\omega$ is $f_\omega(r_\xi(t))$. The goal of the section is to compare the long time statistics of $f_\omega(r_\xi(t))$ when the
endpoint is chosen by a hitting measure and by the angular measure $\leb$.

\subsection{Variance for rays chosen by \texorpdfstring{$\nu$}{nu}}

Let $\eta$ be a generating probability measure on $N$ with finite
exponential word moment, and let $\nu$ be its hitting measure.
We use the following specialization of the author's theorem
\cite[Theorem~1.4 and formula~(1)]{Benard}.

\begin{theorem}[B.]
\label{thm:winding-laws}
Let $\omega$ be a real closed one-form on $S_N$, and set
$e_\nu(\omega)=\E_\eta p_\omega/L_\eta$.
\begin{enumerate}[label=\textup{(\roman*)}]
\item For $\nu$-almost every $\xi$,
\[
       \frac{f_\omega(r_\xi(t))}{t}\longrightarrow e_\nu(\omega).
\]
\item If $e_\nu(\omega)=0$, put $V_\nu(\omega)=\E_\eta p_\omega^2/L_\eta\ge0$.
For $\xi$ chosen according to $\nu$,
\[
       \frac{f_\omega(r_\xi(t))}{\sqrt t}
                    \xrightarrow{\mathrm{law}}\mathcal N(0,V_\nu(\omega)).
\]
\item If $e_\nu(\omega)=0$, then, for $\nu$-almost every $\xi$,
\begin{equation}
\label{eq:lil}
 \Acc_{t\to\infty}\frac{f_\omega(r_\xi(t))}{\sqrt{2t\log\log t}}
              =[-\sqrt{V_\nu(\omega)},\sqrt{V_\nu(\omega)}],
\end{equation}
where $\Acc$ denotes the set of accumulation points.
\end{enumerate}
\end{theorem}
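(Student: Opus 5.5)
The statement is a specialization of \cite[Theorem~1.4 and formula~(1)]{Benard}, so the plan is to reduce it to winding of the random walk itself and then transfer to geodesic rays. First, realize the walk on $N$: let $Z_n=Y_1\cdots Y_n$ with $Y_i$ i.i.d.\ of law $\eta$. By the period relation $f_\omega(nx)=p_\omega(n)+f_\omega(x)$ and the homomorphism property, $f_\omega(Z_no)=\sum_{i=1}^n p_\omega(Y_i)$ is a sum of i.i.d.\ real variables with exponential moments, since $|p_\omega(n)|$ is at most linear in word length. The classical law of large numbers, CLT and Hartman--Wintner law of the iterated logarithm then give
\[
\frac{f_\omega(Z_no)}{n}\to\E_\eta p_\omega,\qquad \frac{f_\omega(Z_no)}{\sqrt n}\xrightarrow{\mathrm{law}}\mathcal N(0,\E_\eta p_\omega^2),
\]
together with the LIL with accumulation set $[-\sqrt{\E_\eta p_\omega^2},\sqrt{\E_\eta p_\omega^2}]$ at scale $\sqrt{2n\log\log n}$, the last two in the centered case.

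Second, the sample path must be compared with the geodesic ray $r_\xi$, $\xi$ the limit point of $Z_no$, whose law is $\nu$. Because $f_\omega$ is Lipschitz on $\D$, it suffices to control the distance between $Z_no$ and the ray. With exponential moments, the standard sublinear (in fact logarithmic) tracking estimates for walks on hyperbolic groups give, almost surely, $d(Z_no,r_\xi(L_\eta n))=O(\log n)$. Hence $f_\omega(r_\xi(L_\eta n))=f_\omega(Z_no)+O(\log n)$, an error that is negligible at every scale considered ($n$, $\sqrt n$ and $\sqrt{n\log\log n}$).

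Third, reparametrize time by $t=L_\eta n$ and interpolate between integer times, using the Lipschitz bound $|f_\omega(r_\xi(t))-f_\omega(r_\xi(s))|\le C|t-s|$. Then (i) holds with limit $\E_\eta p_\omega/L_\eta$. In the centered case the variance $n\E_\eta p_\omega^2$ becomes $(t/L_\eta)\E_\eta p_\omega^2$, which gives $V_\nu(\omega)=\E_\eta p_\omega^2/L_\eta$ in (ii), and the LIL normalization rescales the same way for (iii), since $\log\log(t/L_\eta)\sim\log\log t$.

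The main obstacle is the tracking step. A bound $o(\sqrt{n})$ suffices for (ii), but (iii) needs $o(\sqrt{n\log\log n})$ almost surely, and this is exactly where the exponential moment enters. I would first try the deviation inequalities for exponential-moment walks on hyperbolic groups: they give exponential tails for the distance from $Z_no$ to the geodesic $[o,\xi)$ and for the difference between $d(o,Z_no)$ and $L_\eta n$ at scale $O(\log n)$, and Borel--Cantelli then upgrades these tails to the almost sure $O(\log n)$ bound. Alternatively, one can cite \cite{Benard} directly, as the paper does.
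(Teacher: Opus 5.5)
\paragraph{Main gap: the tracking step.} Your first step is correct: $f_\omega(Z_no)=p_\omega(Z_n)=\sum_{i\le n}p_\omega(Y_i)$ is an i.i.d.\ sum with exponential moments. Part (i) tolerates any $o(n)$ error, so it also goes through. The gap is the claim that $d(Z_no,r_\xi(L_\eta n))=O(\log n)$ almost surely.

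Logarithmic tracking controls only the \emph{transverse} distance from $Z_no$ to the ray. It gives $d(Z_no,r_\xi(s_n))=O(\log n)$ with $s_n=d(o,Z_no)+O(\log n)$. The \emph{longitudinal} position $d(o,Z_no)$ does not stay within $O(\log n)$ of $L_\eta n$. It obeys its own CLT and LIL, with fluctuations of order $\sqrt n$ and $\sqrt{n\log\log n}$, and with positive variance for Zariski-dense walks in $\PSL_2(\R)$. The deviation inequalities you invoke give exponential tails for $|d(o,Z_no)-L_\eta n|$ only at linear scale $\varepsilon n$, so Borel--Cantelli yields nothing at scale $\log n$.

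Consequently the Lipschitz error $K|d(o,Z_no)-L_\eta n|$ in your comparison ($K$ the Lipschitz constant of $f_\omega$) is of exactly the critical size in (ii) and (iii). A sanity check exposes this: your transfer step never uses $e_\nu(\omega)=0$. It would therefore equally give $(f_\omega(r_\xi(t))-te_\nu(\omega))/\sqrt t\to\mathcal N(0,\mathrm{Var}_\eta(p_\omega)/L_\eta)$ when $\E_\eta p_\omega\ne0$. That is not correct in general: the random fluctuations of the time needed to reach distance $t$ then feed into the winding at the same scale. This is exactly why the centering hypothesis appears in (ii)--(iii).

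\paragraph{Repair: a random time change.} Put $S_n=\sum_{i\le n}p_\omega(Y_i)$ and $n(t)=\min\{n:d(o,Z_no)\ge t\}$. Logarithmic tracking, together with the almost sure bound $d(o,Y_no)=O(\log n)$ (exponential moment plus Borel--Cantelli), gives $f_\omega(r_\xi(t))=S_{n(t)}+O(\log t)$. Also $n(t)L_\eta/t\to1$ almost surely.

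Only now is centering used. For a centered i.i.d.\ sum, the equicontinuity of Strassen's limit set gives, for large $n$,
\[
\max_{|m-n|\le\delta n}|S_m-S_n|\le\bigl(\sqrt{\delta\,\E_\eta p_\omega^2}+\varepsilon\bigr)\sqrt{2n\log\log n}.
\]
Hence $S_{n(t)}-S_{\lfloor t/L_\eta\rfloor}=o(\sqrt{t\log\log t})$ almost surely, and Anscombe's theorem gives the CLT for $S_{n(t)}$. Your rescaling then produces $V_\nu(\omega)=\E_\eta p_\omega^2/L_\eta$ and the full accumulation interval in (iii).

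\paragraph{Comparison with the paper.} The paper does none of this. It quotes \cite[Theorem~1.4 and formula~(1)]{Benard}, checking only that $\eta$ is non-elementary via \cite[Lemma~3.6]{CFFT} and that $f_\omega$ is Lipschitz and $N$-equivariant with period homomorphism $p_\omega$. Your fallback of citing \cite{Benard} matches that. Your direct sketch as written, however, does not establish (ii)--(iii).
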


In the above $\mathcal N(0,V_\nu(\omega))$ is the Gaussian distribution of variance $V_\nu(\omega)$. The variance may vanish, as it does for an exact form. 

Note \cite[Theorem~1.4 and formula~(1)]{Benard} does apply because $\eta$ is indeed non-elementary \cite[Lemma~3.6]{CFFT}, and because $f_{\omega}$ is Lipschitz, $N$-equivariant with period homomorphism $p_\omega$.

\subsection{Variance for rays chosen by \texorpdfstring{$\leb$}{leb}}
\label{subsec:geometric}

The above \Cref{thm:winding-laws} applies for the angular measure $\leb$ on $\partial \D$. Indeed, it is the hitting measure of some generating measure on $N$ with finite exponential moment, see J. Li \cite[Appendix~A, Theorem~A.1]{LNP} (which applies thanks to \cite[Corollary~4.15]{Quint}). For the angular measure, the next proposition describes geometrically the mean and variance appearing in \Cref{thm:winding-laws}. The proof relies on Ratner's CLT from  \cite{Ratner}.

\begin{proposition}
\label{prop:geometric}
For a real closed one-form $\omega$ on  $S_{N}$,
angular-almost-every ray has zero winding drift, i.e. $e_{\leb}(\omega)=0$. The angular ray CLT
and LIL have variance
\begin{equation}
\label{eq:geometric-variance}
 V_{\leb}(\omega)=\frac{2}{\Area(S_N)}
                              \norm{\omega^{\mathrm h}}_{L^2(S_N)}^2.
\end{equation}
\end{proposition}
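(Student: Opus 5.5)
The plan is to reduce the winding of angular rays to the geodesic flow on the unit tangent bundle $T^1S_N$ and apply Ratner's CLT. First, since $\omega$ and its harmonic representative $\omega^{\mathrm h}$ differ by $dh$ with $h$ a smooth function on the compact surface $S_N$, the two primitives differ by $f_\omega-f_{\omega^{\mathrm h}}=h\circ\pi-h(o)$, which is bounded. Bounded terms vanish after division by $t$, $\sqrt t$ or $\sqrt{2t\log\log t}$. We may therefore assume from now on that $\omega$ is harmonic.

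Next, write the winding as an ergodic integral. Let $\phi_t$ be the geodesic flow on $T^1S_N$ and define $F(v)=\omega_{\pi v}(v)$. Then
\[
f_\omega(r_\xi(t))=\int_0^t F(\phi_s v_\xi)\dd s ,
\]
where $v_\xi$ is the unit vector at $o$ pointing to $\xi$. The angular measure on $\partial\D$ is the law of $v_\xi$ when the direction is chosen uniformly in the fiber over $o$. The Liouville measure $m$ disintegrates into these fiber measures over area. The function $F$ is odd under the flip $v\mapsto -v$, so $\int F\dd m=0$, and ergodicity gives drift zero for $m$-a.e.\ vector. To pass from $m$-a.e.\ vector to a.e.\ direction over the single point $o$, I would use the stable foliation. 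Rays ending at the same $\xi$ from nearby base points are asymptotic, so their windings differ by $O(1)$. Since $m$-a.e.\ statements are stable-saturated up to bounded errors, an a.e.\ statement over a small ball of base points transfers to the fiber over $o$ (alternatively, by strong stable absolute continuity). The CLT transfers the same way, since convergence in law is insensitive to $O(1)$ errors, and the LIL is handled likewise.

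For the variance, Ratner's CLT gives, for Hölder $F$ with $m$-mean zero, the variance $\sigma^2(F)=\int_{-\infty}^{\infty}\int F\cdot F\circ\phi_s\dd m\dd s$, with $m$ the normalized Liouville measure. The main obstacle is computing this integral explicitly as $\tfrac{2}{\Area}\norm{\omega}^2_{L^2}$. I would first try to avoid the time correlations altogether by a cohomological shortcut. Write $F=F_1+F_2$, where $F_1$ is the "horizontal" part $v\mapsto \omega(v)$ restricted to its fiber average of $F^2$. Harmonicity of $\omega$ is what makes the correlations computable.

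Concretely, the identity I would aim for uses the fact that, for harmonic $\omega$, the function $F$ lies in the span of the weight $\pm1$ $K$-types. On each irreducible principal or complementary series component, the integral $\int_{\R} \langle F,F\circ\phi_s\rangle\dd s$ is given by an explicit spherical-function formula. Harmonic one-forms correspond to discrete series of lowest weight $\pm 2$? I would check which: holomorphic $1$-forms give lowest weight vectors of weight $2$ on $T^1$ (half-angle conventions). For discrete series $D_2^\pm$, the matrix coefficient along $\phi_s$ of the lowest-weight vector is $\sech^2(s/2)$-type, and its integral over $\R$ gives a fixed constant. Tracking normalizations, $\int F^2\dd m=\frac{1}{2\Area}\norm{\omega}^2$, since the fiber average of $\omega(v)^2$ equals $|\omega|^2/2$. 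Multiplying by the total integral of the normalized matrix coefficient, which should be $4$, yields $\tfrac{2}{\Area}\norm{\omega}^2$. Verifying this constant via the explicit formula $\langle \pi(a_s)e,e\rangle=\cosh(s/2)^{-2}$ for $D_2$ is the step I expect to require the most care.

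Finally, the complex case is not needed, as $\omega$ is real; real and imaginary parts would otherwise be treated separately.
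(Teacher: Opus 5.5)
Your reduction to harmonic $\omega$ is sound, and so is your zero-drift argument: Birkhoff for the flip-odd observable $F$, then Fubini over base points, mutual absolute continuity of $\leb_x$ and $\leb_o$, and the bound $d(r_{x,\xi}(t),r_\xi(t))\le d(x,o)$. This differs from the paper, which cites \cite[Lemma~5.2]{Benard}, but it is valid. Your discrete-series computation is also a legitimate substitute for the paper's explicit Taylor-coefficient computation. The weight $\pm2$ matrix coefficient $\sech^2(s/2)$ is exactly the correlation \eqref{eq:correlation}, with $C(0)=\norm{\omega}^2_{L^2}/(2\Area(S_N))$. Since $\int_\R\sech^2(s/2)\dd s=4$, and the cross terms between the weight $+2$ and weight $-2$ components vanish by orthogonality, Ratner's variance is $\sigma^2=2\norm{\omega}_{L^2}^2/\Area(S_N)$.

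The gap is the claim that ``the CLT transfers the same way''. Convergence in law is not an almost-everywhere property, so the Fubini/stable-leaf argument that works for the drift does not apply. The $O(1)$ bound only lets you replace the base point $x$ by $o$ for a fixed endpoint $\xi$. After that step, Ratner's CLT under Liouville measure becomes a CLT for $f(r_\xi(T))/\sqrt T$ with $\xi$ distributed as the mixture $\Area(S_N)^{-1}\int_Q\leb_x\dd A(x)$ over a fundamental domain $Q$, not as $\leb_o$. Moving a distributional limit to the equivalent measure $\leb_o$, or from $m$ to $m$ restricted to vectors over a small ball, needs an extra input. One such input is Eagleson's theorem that distributional limits for ergodic flows are mixing in R\'enyi's sense, applied to a measure $P\ll m$ over a small ball around $o$ whose endpoint marginal is exactly $\leb_o$.

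Likewise, Ratner gives no LIL under Liouville measure, so ``the LIL is handled likewise'' has nothing to transfer. The paper avoids both problems by working in the opposite, easy direction. Since $\leb$ is the hitting measure of a finite-exponential-moment measure on $N$ \cite{LNP}, \cref{thm:winding-laws} already supplies the angular CLT and LIL with some unknown variance $V$. Then \cite[Lemma~5.5]{Benard} extends the CLT to $\xi\sim\leb_x$ for every $x$, and the $O(1)$ bound turns it into a CLT for $W_T(v_{x,\xi})$. Integrating characteristic functions over a fundamental domain, a mixture of laws with a common limit, gives a Liouville CLT with variance $V$. Ratner then forces $V=\sigma^2$, and the LIL comes with the same $V$.
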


\begin{proof}
Put $M=S_N$, $A=\Area(M)$, and choose disk coordinates with $o=0$.
Writing $\omega=\omega^{\mathrm h}+dh$, one has
\[
 \norm{f_\omega-f_{\omega^{\mathrm h}}}_\infty
 \le 2\norm h_\infty.
\]
Thus it suffices to consider harmonic $\omega$; the case $\omega=0$
is immediate.

Set
\[
 f=f_\omega,\qquad K=\norm\omega_\infty,\qquad
 V=V_{\leb}(\omega),
\]
where $V$ is the common variance in the ray CLT and LIL of
\cref{thm:winding-laws}.

Since $\leb$ is the Patterson--Sullivan measure of $N$ based at $o$,
\cite[Lemma~5.2]{Benard} applied to the  functional
$f_{\omega}$ gives $e_{\leb}(\omega)=0$.

Let $g_t$ be the geodesic flow on $T^1M$, with normalized Liouville
measure
\[
 \dd\ell=\frac{\dd A(x)}{A}\frac{\dd\theta}{2\pi}.
\]
For a harmonic form $\alpha$, put $F_\alpha(v)=\alpha(v)$.
We claim that, for harmonic forms $\alpha,\beta$ and $t\in\R$,
\begin{equation}
\label{eq:correlation}
 C_{\alpha,\beta}(t)
 :=\int_{T^1M}\overline{F_\alpha(v)}F_\beta(g_t v)\dd\ell(v)
 =\frac{\langle\alpha,\beta\rangle_{L^2(M)}}{2A}
       \sech^2(t/2).
\end{equation}
To compute the angular average at $x\in M$, lift $x$ to the disk
origin and write
\[
 \widetilde\alpha
 =a_\alpha(z)\dd z+b_\alpha(\bar z)\dd\bar z,
 \qquad
 \widetilde\beta
 =a_\beta(z)\dd z+b_\beta(\bar z)\dd\bar z,
\]
where all four coefficient functions are holomorphic in their
indicated variables. For $t\ge0$, put $r=\tanh(t/2)$.
In the metric $4|\dd z|^2/(1-|z|^2)^2$, the initial unit vector
$v_\theta=e^{i\theta}/2$ reaches $re^{i\theta}$ with velocity
$(1-r^2)e^{i\theta}/2$. Expanding the coefficients in Taylor series
and integrating in $\theta$ gives
\[
 \begin{aligned}
 &\int_0^{2\pi}
   \overline{F_\alpha(v_\theta)}F_\beta(g_t v_\theta)
        \frac{\dd\theta}{2\pi}\\
 &\qquad=
 \frac{1-r^2}{4}
 \left(
   \overline{a_\alpha(0)}a_\beta(0)
   +\overline{b_\alpha(0)}b_\beta(0)
 \right)
 =\frac{1-r^2}{2}\langle\alpha_x,\beta_x\rangle_x.
 \end{aligned}
\]
Integrating in $x$ proves \eqref{eq:correlation} for $t\ge0$.
The identity
$C_{\alpha,\beta}(-t)=\overline{C_{\beta,\alpha}(t)}$,
which follows from flow invariance, proves it for $t<0$.

Now put
\[
 W_T(v)=\int_0^T F_\omega(g_t v)\dd t.
\]
Since $\int F_\omega\dd\ell=0$, stationarity and
\eqref{eq:correlation} give
\begin{equation}
\label{eq:second-moment}
 \begin{aligned}
 \sigma^2
 :=\lim_{T\to\infty}\frac1T\int W_T^2\dd\ell
 &=\lim_{T\to\infty}
   2\int_0^T\left(1-\frac tT\right)C_{\omega,\omega}(t)\dd t\\
 &=\frac{\norm\omega_{L^2(M)}^2}{A}
      \int_0^\infty\sech^2(t/2)\dd t
 =\frac{2\norm\omega_{L^2(M)}^2}{A}.
 \end{aligned}
\end{equation}
Ratner's CLT \cite{Ratner}, applied to the smooth mean-zero observable
$F_\omega$, therefore yields
\[
 \frac{W_T}{\sqrt T}
 \xrightarrow[\ell]{\mathrm{law}}\mathcal N(0,\sigma^2).
\]

It remains to identify $\sigma^2$ with the ray variance $V$.
For $x\in\D$ and $\xi\in\partial\D$, let $r_{x,\xi}$ be the
unit-speed ray from $x$ to $\xi$, and let $v_{x,\xi}\in T^1M$
be its projected initial vector. Since
\[
 d(r_{x,\xi}(T),r_\xi(T))\le d(x,o),
\]
the Lipschitz bound on $f$ gives
\[
 \begin{aligned}
 |W_T(v_{x,\xi})-f(r_\xi(T))|
 &=|f(r_{x,\xi}(T))-f(x)-f(r_\xi(T))|\\
 &\le 2K\,d(x,o).
 \end{aligned}
\]
The visual measure $\leb_x$ satisfies
\[
 \frac{d\leb_x}{d\leb_o}(\xi)
 =\frac{1-|x|^2}{|\xi-x|^2}.
\]
By \cite[Lemma~5.5]{Benard}, the ray CLT with variance $V$ remains
valid when $\xi$ has law $\leb_x$. The preceding bound therefore gives
\[
 \frac{W_T(v_{x,\xi})}{\sqrt T}
 \xrightarrow[\xi\sim\leb_x]{\mathrm{law}}\mathcal N(0,V)
 \qquad(x\in\D).
\]
Choose a bounded measurable fundamental domain $Q$ for $N$.
For every $u\in\R$, dominated convergence yields
\[
 \begin{aligned}
 \int_{T^1M}e^{iuW_T(v)/\sqrt T}\dd\ell(v)
 &=\frac1A\int_Q\int_{\partial\D}
       e^{iuW_T(v_{x,\xi})/\sqrt T}\dd\leb_x(\xi)\dd A(x)\\
 &\longrightarrow e^{-u^2V/2}.
 \end{aligned}
\]
Comparing with Ratner's CLT gives $V=\sigma^2$, proving
\eqref{eq:geometric-variance}. The LIL with this same variance
follows from \cref{thm:winding-laws}.
\end{proof}

\subsection{Variance equality}

Let $\mu$ now be a probability measure on $\Gamma$ whose support
generates $\Gamma$ as a group and which has finite exponential word
moment. This includes every finitely supported measure; the more general
moment assumption will be useful when treating torsion in the final section.
Write $\nu$ for its hitting measure on $\partial \D$. Write $X_n=Y_1\cdots Y_n$ where $(Y_{i})_{i\geq 1}$ are i.i.d. variables of law $\mu$. Let
\[
 \tau=\min\{n\ge1:X_n\in N\},\qquad
 \mu_N=\text{the law of }X_\tau.
\]
We first check the hypotheses needed to apply the winding estimate from \Cref{thm:winding-laws} to
this induced measure.
\begin{lemma}
\label{lem:returns}
The return time has a finite exponential moment and $\E\tau=d$ (where $d=|D|$).
The measure $\mu_N$ has a finite exponential word moment, its support
generates $N$ as a group, and its hitting measure is $\nu$. Moreover,
\begin{equation}
\label{eq:drift-return}
                         L_{\mu_N}=dL_\mu.
\end{equation}
\end{lemma}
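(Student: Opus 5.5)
The plan is to view the projected walk $D_n=X_nN\in D$ as a random walk on the finite group $D=\Gamma/N$ driven by the pushforward $\bar\mu$ of $\mu$. Since $\supp\mu$ generates $\Gamma$, $\supp\bar\mu$ generates $D$, so this walk is irreducible. Its stationary distribution is uniform because the walk is a group random walk and $\bar\mu$-convolution preserves the uniform measure. Then $\tau$ is the first return time of this finite irreducible chain to the identity coset. Kac's lemma gives $\E\tau=1/\pi(N)=d$. The exponential moment follows from the standard bound: there is $m$ such that from every state the walk hits $N$ within $m$ steps with probability at least $p>0$. Hence $\Prob(\tau>km)\le(1-p)^k$.

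\textbf{Exponential moment of $\mu_N$.} Write $X_\tau=Y_1\cdots Y_\tau$, so $|X_\tau|\le\sum_{i\le\tau}|Y_i|$. I will bound $\E e^{b|X_\tau|}$ by Cauchy--Schwarz or Hölder:
\[
\E e^{b|X_\tau|}\le\sum_k \E\bigl[e^{b\sum_{i\le k}|Y_i|}\mathbf 1_{\tau\ge k}\bigr]\le\sum_k \bigl(\E e^{2b|Y_1|}\bigr)^{k/2}\Prob(\tau\ge k)^{1/2}.
\]
For $b$ small enough, $\E e^{2b|Y_1|}$ is close to $1$ by dominated convergence. It therefore beats the geometric decay of $\Prob(\tau\ge k)$, and the series converges. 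Word lengths on $N$ and $\Gamma$ are comparable because $N$ has finite index, so this gives an exponential word moment for $\mu_N$ with respect to generators of $N$.

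\textbf{Generation.} I will show that the group generated by $\supp\mu_N$ is $N$. Every element of $N$ can be written as a product $\gamma_1^{\pm1}\cdots\gamma_k^{\pm1}$ of elements of $\supp\mu$. The difficulty is inverses, and I expect this to be the main obstacle. To handle it, note that for $s\in\supp\mu$ and any $g\in\supp\mu^{*j}$, irreducibility on the finite group $D$ gives a word $h$ in $\supp\mu$ (a positive-semigroup word) with $gh\in N$ and $gsh'\in N$ for suitable such words. Products of first-return excursions then realize all positive words landing in $N$. Concretely, every positive word $w$ with $w\in N$ lies in the semigroup generated by $\supp\mu_N$, by cutting $w$ at its successive visits to $N$. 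Since $D$ is finite, $s^{-1}=s^{o-1}u$ where $o$ is the order of $sN$ and $s^o\in N$; this lets us rewrite each element of $N$ as a quotient of positive words lying in $N$. Hence the generated group is $N$.

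\textbf{Hitting measure and drift.} Since $X_{\tau_k}$, the successive return times, form a subsequence of $X_n$, it is a $\mu_N$-random walk converging to the same boundary point $\xi$. Hence $\nu$ is the hitting measure of $\mu_N$, using uniqueness of the $\mu_N$-stationary measure, which holds because $\mu_N$ is non-elementary by \cite[Lemma~3.6]{CFFT}. For the drift, $\tau_k/k\to d$ almost surely by the strong law of large numbers. Then
\[
d(o,X_{\tau_k}o)/k=\bigl(d(o,X_{\tau_k}o)/\tau_k\bigr)(\tau_k/k)\to L_\mu d,
\]
which gives \eqref{eq:drift-return}.
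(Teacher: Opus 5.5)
Apart from the generation step, your proof is the paper's. It uses the geometric tail for $\tau$ from a uniform hitting bound for the walk on $D$, and $\E\tau=d$ (Kac's lemma is exactly what the paper's occupation-count argument proves). It uses the same Cauchy--Schwarz estimate for $\E e^{b|X_\tau|}$, cuts positive words at their visits to $N$, and uses the subsequence $X_{\tau_k}$ with $\tau_k/k\to d$ for the hitting measure and the drift.

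In the generation step, the substitution $\gamma^{-1}=\gamma^{o-1}(\gamma^{o})^{-1}$ does not by itself give what you claim. Applied to a word for $n\in N$, it yields $n=P_0u_1P_1\cdots u_mP_m$, with $P_i$ positive words and $u_i=(\gamma_i^{o_i})^{-1}\in N$. However, the partial products $Q_j=P_0\cdots P_{j-1}$ are in general not in $N$. So $n$ has not yet been written in terms of positive words lying in $N$ and their inverses.

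The missing observation is that conjugates of positive $N$-words by positive words stay in $H=\langle\supp\mu_N\rangle$. Write $n=\prod_{j=1}^m(Q_ju_jQ_j^{-1})\cdot Q_{m+1}$. Here $Q_{m+1}$ is a positive word lying in $N$, because $n$ and all the conjugates lie in the normal subgroup $N$. For each $j$, choose by irreducibility on $D$ a positive word $R$ with $Q_jR\in N$. Then $Q_j\gamma_j^{o_j}Q_j^{-1}=(Q_j\gamma_j^{o_j}R)(Q_jR)^{-1}$. Both factors are positive words in $N$, since $Q_j\gamma_j^{o_j}R=(Q_j\gamma_j^{o_j}Q_j^{-1})(Q_jR)$. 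With this added, your route is complete.

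The paper avoids the conjugation step by using the Schreier labels $t_\delta st_{\delta s}^{-1}=(t_\delta sr_{\delta s})(t_{\delta s}r_{\delta s})^{-1}$. These are quotients of positive loops by construction, and they generate $N$ by Reidemeister--Schreier rewriting. Note the common return word $r_{\delta s}$ in both factors; your fragment with $gh\in N$ and $gsh'\in N$ uses two different return words, so it does not produce such labels.
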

\begin{proof}
The quotient walk on $D$ is irreducible, indeed its increments are permutations
of a finite set, so the inverse of each is a positive power. It follows that for every starting point, there is positive probability to visit $e$ in less than $k$ steps, provided $k$ is a fixed large integer. 
 Applying the
Markov property in blocks of length $k$ gives $\Prob(\tau>n)\le Cr^n$, with $r<1$. This justifies exponential moment for $\tau$.

For the mean, given $\delta\in D$, set
$a(\delta)=\E\sum_{k=0}^{\tau-1}\mathbf1_{\{X_kN=\delta\}}$.
Counting instead the times $1,\ldots,\tau$ leaves these counts unchanged;
therefore $a$ is  $\mu$-invariant function, so it is constant. Now $a(e)=1$ gives $a(\delta)=1$ for every $\delta\in D$, whence $\E\tau=d$.

Let $|\cdot|$ be word length in $\Gamma$ and
$M(a)=\E_\mu e^{a|Y_1|}$. For small $a>0$, Cauchy--Schwarz gives
\[
 \E e^{a|X_\tau|}
 \le\sum_{n\ge1}M(2a)^{n/2}\Prob(\tau=n)^{1/2}<\infty.
\]
A finite-index subgroup has word length bounded above by a constant
times the ambient word length, up to an additive constant. To see this,
insert coset representatives between letters in a fixed finite generating
set; the resulting subgroup factors lie in a fixed finite set.
Thus $\mu_N$ has the asserted moment.

For generation, choose positive support words $t_\delta$ from $e$ to
$\delta$, and positive words $r_\delta$ returning from $\delta$ to $e$.
For $s\in\supp\mu$, the subgroup label
\[
 t_\delta s t_{\delta s}^{-1}
       =(t_\delta s r_{\delta s})(t_{\delta s}r_{\delta s})^{-1}
\]
is a quotient of positive loops. Every such loop splits at intermediate
returns into first-return loops, whose products lie in the group generated
by $\supp\mu_N$. The displayed labels generate $N$: insert representatives
into any support word, allowing inverses, representing an element of $N$;
inverse letters contribute inverses of labels. This proves the claim even
when the original support is infinite.

Successive return blocks are independent and identically distributed.
If $\tau_k$ is the $k$th return time, then $\tau_k/k\to d$ by the law of
large numbers, and $X_{\tau_k}$ is the $\mu_N$-walk. It is a subsequence
of the original path, so its endpoint law is $\nu$. Finally,
$d(o,X_{\tau_k}o)/k\to dL_\mu$, proving \eqref{eq:drift-return}.
\end{proof}

 By \cref{lem:returns}, the measure $\mu_N$
therefore satisfies all hypotheses of \cref{thm:winding-laws}.
 
 \bigskip
Assume now that $\nu\not \perp \leb$. Let $\omega$ be a real closed one-form on $S_{N}$. Using the LLN from \cref{thm:winding-laws} and \cref{prop:geometric} gives $e_{\nu}(\omega)=e_{\leb}(\omega)=0$.
We may  then apply the law of the iterated logarithm from \cref{thm:winding-laws}, with measures $\nu$ and $\leb$.  They must have the
same interval of accumulation points, so their variances agree. By  \cref{thm:winding-laws} and \cref{prop:geometric}, this means (after polarization) that  for complex closed forms $\omega, \zeta\in \Omega^1_{\mathrm{cl}}(S_{N};\C)$,
\begin{equation}
\label{eq:induced-covariance}
 \frac{\E_{\mu_N}[\overline{p_\omega}p_\zeta]}{L_{\mu_N}}
   =\frac{2}{\Area(S_N)}
                  \langle\omega^{\mathrm h},\zeta^{\mathrm h}\rangle_{L^2(S_N)}.
\end{equation}

For forms in a nontrivial character space, the left side can be
expressed directly in terms of $\mu$. Let $\sigma_\omega^0$ denote the
centered representative of the period cocycle, as in \cref{lem:centered}.

\begin{lemma}
\label{lem:stopped-covariance}
Let $\chi\ne1$ be a character of $D$, and let $\omega,\zeta \in \Omega^1_{\mathrm{cl}}(S_{N};\C)_{\chi}$ be
$\chi$-equivariant closed forms. Then
\begin{equation}
\label{eq:stopped-covariance}
 \E_{\mu_N}[\overline{p_\omega}p_\zeta]
               =d\E_\mu[\overline{\sigma_\omega^0}\sigma_\zeta^0].
\end{equation}
\end{lemma}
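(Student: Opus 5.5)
The plan is to write $p_\omega(X_\tau)$ as the value at the stopping time $\tau$ of a martingale built from the centered cocycle, and then combine orthogonality of martingale increments with Wald's identity.

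\textbf{Reduction to centered cocycles.} Since $\chi\neq 1$, \cref{lem:centered} provides centered representatives $\sigma^0_\omega=\sigma_\omega+(\chi-1)a$ and $\sigma^0_\zeta=\sigma_\zeta+(\chi-1)b$. The coboundary parts vanish on $N=\ker\chi$, and the period cocycle restricts to $p_\omega$ on $N$. Hence $p_\omega(X_\tau)=\sigma^0_\omega(X_\tau)$, and likewise for $\zeta$. The left side of \eqref{eq:stopped-covariance} is therefore $\E[\overline{\sigma^0_\omega(X_\tau)}\sigma^0_\zeta(X_\tau)]$.

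\textbf{Martingale expansion.} Iterating the cocycle identity gives
\[
 \sigma^0_\omega(X_n)=\sum_{k=1}^n \chi(X_{k-1})\,\sigma^0_\omega(Y_k)=:M^\omega_n ,
\]
and similarly for $\zeta$. Let $\mathcal F_k$ be the $\sigma$-algebra generated by $Y_1,\dots,Y_k$. Then $\chi(X_{k-1})$ is $\mathcal F_{k-1}$-measurable, while $Y_k$ is independent of $\mathcal F_{k-1}$ with $\E_\mu\sigma^0_\omega=0$. So $M^\omega_n$ is a complex martingale.

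We work with the stopped martingales $M^\omega_{\tau\wedge n}$ and $M^\zeta_{\tau\wedge n}$, and expand $\E[\overline{M^\omega_{\tau\wedge n}}M^\zeta_{\tau\wedge n}]$ as a double sum over increments carrying indicators $\mathbf 1_{\{k\le\tau\}}$. Each event $\{k\le\tau\}=\{\tau\le k-1\}^c$ lies in $\mathcal F_{k-1}$.

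For $j<k$, the factor $\overline{\chi(X_{j-1})\sigma^0_\omega(Y_j)}\,\chi(X_{k-1})\,\mathbf 1_{\{j\le\tau\}}\mathbf 1_{\{k\le\tau\}}$ is $\mathcal F_{k-1}$-measurable. Conditioning on $\mathcal F_{k-1}$ and using $\E\sigma^0_\zeta(Y_k)=0$ kills the cross term. The symmetric case $j>k$ is handled the same way.

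On the diagonal, $|\chi|=1$, so $\overline{\chi(X_{k-1})}\chi(X_{k-1})=1$. Independence of $Y_k$ from $\{k\le\tau\}$ then gives the diagonal contribution
\[
 \sum_{k=1}^n\Prob(\tau\ge k)\,\E_\mu[\overline{\sigma^0_\omega}\sigma^0_\zeta]
 =\E[\tau\wedge n]\;\E_\mu[\overline{\sigma^0_\omega}\sigma^0_\zeta].
\]

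\textbf{Passing to the limit.} As $n\to\infty$, the right side tends to $\E\tau\cdot\E_\mu[\overline{\sigma^0_\omega}\sigma^0_\zeta]=d\,\E_\mu[\overline{\sigma^0_\omega}\sigma^0_\zeta]$, using $\E\tau=d$ from \cref{lem:returns}.

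For the left side, $M^\omega_{\tau\wedge n}\to M^\omega_\tau$ almost surely. A cocycle for a unitary character grows at most linearly in word length, so $|M^\omega_{\tau\wedge n}|\le C\sum_{k\le\tau}(1+|Y_k|)$. This bound is square integrable because $\tau$ has an exponential moment (\cref{lem:returns}) and $\mu$ has an exponential word moment. Dominated convergence then gives convergence of the left side to $\E_{\mu_N}[\overline{p_\omega}p_\zeta]$.

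The only genuinely delicate point is this last justification of the optional stopping and Wald-type step. Once the domination bound is in place, the rest is a routine computation.
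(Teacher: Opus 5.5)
Your proof is correct and follows the paper's argument: the martingale $\sigma^0_\omega(X_n)$ with increments $\chi(X_{k-1})\sigma^0_\omega(Y_k)$, orthogonality of increments giving $\E[\tau\wedge n]\,\E_\mu[\overline{\sigma^0_\omega}\sigma^0_\zeta]$, vanishing of coboundaries on $N$ (note $N$ is only contained in $\ker\chi$, not necessarily equal to it, which is all you need), and $\E\tau=d$. The only difference is the limit step: you use dominated convergence, relying on the exponential moments of $\tau$ and $\mu$, whereas the paper observes that $\E|M^\omega_{\tau\wedge m}-M^\omega_{\tau\wedge n}|^2=\E_\mu|\sigma^0_\omega|^2\,\E[(\tau\wedge m)-(\tau\wedge n)]$, so the stopped martingales are Cauchy in $L^2$ using only $\E\tau<\infty$.
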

\begin{proof}
The process $M_n^\omega=\sigma_\omega^0(X_n)$ is a martingale, with
increments
\[
 M_{n+1}^\omega-M_n^\omega
               =\chi(X_n)\sigma_\omega^0(Y_{n+1}).
\]
Since $|\chi(X_n)|=1$, the conditional second moment of an increment
is the constant $K_\omega=\E_\mu|\sigma_\omega^0|^2<\infty$.
For $m\ge n$, orthogonality of martingale increments gives
\[
 \E|M_{\tau\wedge m}^\omega-M_{\tau\wedge n}^\omega|^2
       =K_\omega\E[(\tau\wedge m)-(\tau\wedge n)].
\]
As $\E\tau=d<\infty$, the stopped martingales converge in $L^2$ to
$M_\tau^\omega$. The conditional cross moment for $\omega,\zeta$ is
likewise the constant $\E_\mu[\overline{\sigma_\omega^0}\sigma_\zeta^0]$.
Summing up to $\tau\wedge n$ and passing to the $L^2$ limit gives
\[
 \E[\overline{M_\tau^\omega}M_\tau^\zeta]
          =\E\tau\,\E_\mu[\overline{\sigma_\omega^0}\sigma_\zeta^0].
\]
Coboundaries vanish on $N$, so $M_\tau^\omega=p_\omega(X_\tau)$,
and similarly for $\zeta$. This proves the identity.
\end{proof}

\begin{corollary}
\label{prop:covariance}
Assume $\nu\not \perp \leb$. For every finite-index normal subgroup
$N\lhd\Gamma$, every nontrivial character $\chi$ of $D=\Gamma/N$,
and every pair of $\chi$-equivariant closed forms $\omega, \zeta \in \Omega^1_{\mathrm{cl}}(S_{N};\C)_{\chi}$,
\begin{equation}
\label{eq:scalar-covariance}
 \E_\mu[\overline{\sigma_\omega^0}\sigma_\zeta^0]
   =\frac{\kappa}{d}
           \langle\omega^{\mathrm h},\zeta^{\mathrm h}\rangle_{L^2(S_N)},
 \qquad \text{ where }\kappa=\frac{2L_\mu}{\Area(S)}.
\end{equation}
\end{corollary}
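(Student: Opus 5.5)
The plan is to chain the polarized variance identity \eqref{eq:induced-covariance} with \cref{lem:stopped-covariance} and the drift relation \eqref{eq:drift-return}. Fix $N$, a nontrivial character $\chi$ of $D$, and $\chi$-equivariant closed forms $\omega,\zeta$.

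First, I will justify that \eqref{eq:induced-covariance} holds for these complex forms. Under $\nu\not\perp\leb$, the LIL comparison gives equality of variances $V_\nu(\theta)=V_{\leb}(\theta)$ for every real closed form $\theta$. By \cref{thm:winding-laws} with $\eta=\mu_N$ and by \cref{prop:geometric}, this reads
\[
\frac{\E_{\mu_N}[p_\theta^2]}{L_{\mu_N}}=\frac{2}{\Area(S_N)}\norm{\theta^{\mathrm h}}^2_{L^2(S_N)}.
\]
Both sides are quadratic forms in the real form $\theta$. The map $\theta\mapsto\theta^{\mathrm h}$ is real-linear and commutes with complex conjugation, and $p_\theta$ is linear. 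Polarizing therefore gives a real bilinear identity. Extending sesquilinearly to complex forms $\omega=\omega_1+i\omega_2$ and $\zeta=\zeta_1+i\zeta_2$ yields \eqref{eq:induced-covariance}, with $\overline{p_\omega}=p_{\overline\omega}$ and $\langle\cdot,\cdot\rangle$ conjugate-linear in the first slot.

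Second, I will substitute. \Cref{lem:stopped-covariance} replaces the left numerator by $d\,\E_\mu[\overline{\sigma^0_\omega}\sigma^0_\zeta]$, and \eqref{eq:drift-return} replaces $L_{\mu_N}$ by $dL_\mu$. The factors $d$ cancel, leaving
\[
\E_\mu[\overline{\sigma^0_\omega}\sigma^0_\zeta]=\frac{2L_\mu}{\Area(S_N)}\langle\omega^{\mathrm h},\zeta^{\mathrm h}\rangle_{L^2(S_N)}.
\]
Finally, $\Area(S_N)=d\,\Area(S)$ gives exactly $\kappa/d$ with $\kappa=2L_\mu/\Area(S)$.

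The only point needing care is the polarization step. One must check that harmonic replacement is compatible with real and imaginary parts, which follows from uniqueness in \cref{lem:hodge}. One must also check that the variance identity was derived for all real closed forms on $S_N$, not only equivariant ones. It was, so restricting to $\chi$-equivariant $\omega,\zeta$ afterwards is harmless. Everything else is bookkeeping.
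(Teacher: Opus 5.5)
Your proposal is correct and follows the paper's proof: combine the polarized variance identity \eqref{eq:induced-covariance} with \cref{lem:stopped-covariance}, $L_{\mu_N}=dL_\mu$, and $\Area(S_N)=d\Area(S)$, and cancel the factors of $d$. Your term-by-term check of the sesquilinear polarization, using complex-linearity of harmonic replacement, spells out what the paper states only as ``after polarization'' just before the corollary.
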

\begin{proof}
Combine \eqref{eq:induced-covariance} and \eqref{eq:stopped-covariance}
with $L_{\mu_N}=dL_\mu$ and $\Area(S_N)=d\Area(S)$.
\end{proof}

The constant $\kappa$ depends only on the original walk and surface.

\section{From variance equality to rationality of the quarter turn map and topological contradiction}
\label{sec:rational}

Let $\mu$ be a finitely supported generating probability measure on the surface group $\Gamma$.
Suppose for contradiction that $\nu\not \perp \leb$.
For $q\in\Ttor$, apply \cref{prop:covariance} to the cover $S_{N_q}$ from \S\ref{subsec:family}.
It gives for all $u,v\in V=\C^{2g-2}$,
\begin{equation}
\label{eq:family-covariance}
 \E_\mu[\overline{\sigma_{q,u}^0}\sigma_{q,v}^0]
       =\frac{\kappa}{d_q}
                    \langle\omega_{q,u},\omega_{q,v}\rangle_{L^2(S_{N_q})}.
\end{equation}
Let $J_q \in \mathrm{GL_{2g-2}}(\C)$ be the quarter-turn in period coordinates:
\begin{equation}
\label{eq:quarter-turn}
                         \rot\omega_{q,v}=\omega_{q,J_qv}.
\end{equation}
It is well defined by \cref{lem:hodge,lem:coordinates} and preservation
of the character space, and satisfies $J_q^2=-\Id$.
Our Hermitian convention, together with \eqref{eq:energy},
\eqref{eq:conjugation}, and \eqref{eq:bilinear-pairing}, gives
\[
 \frac1{d_q}\langle\omega_{q,u},\omega_{q,v}\rangle_{L^2}
                         =u^*M(q^{-1})J_qv.
\]

For every $q\in\T^2$, define the Hermitian covariance matrix $A(q)\in M_{2g-2}(\C)$ by
\begin{equation}
\label{eq:walk-matrix}
                 u^*A(q)v
                  =\E_\mu[\overline{\sigma_{q,u}^0}\sigma_{q,v}^0].
\end{equation}
At finite-order points the preceding identities imply
\begin{equation}
\label{eq:J-rational}
                 J_q=\kappa^{-1}M(q^{-1})^{-1}A(q).
\end{equation}
This formula will extend the quarter turn to the whole torus.

\begin{lemma}[Rationality of the quarter turn map]
\label{lem:J}
The matrix $A(q)$ is positive definite on $\T^2$. Its entries, and the
entries of $J_{q}$ from \eqref{eq:J-rational}, belong to a ring
\begin{equation}
\label{eq:ring}
                    R=\C[z^{\pm1},w^{\pm1},F^{-1}],
\end{equation}
where $F\in \C[z^{\pm1},w^{\pm1}]$ is a Laurent polynomial with no zero on $\T^2$.
The extension is continuous and satisfies $J^2=-\Id$ as an identity
over $R$.
\end{lemma}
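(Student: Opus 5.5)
We first make the centered cocycles explicit, then read off $A(q)$, then $J_q$. By \cref{lem:word-coordinates}, for each $\gamma\in\supp\mu$ (a finite set) the value $\sigma_{q,v}(\gamma)$ is linear in $v$ with coefficients in $\C[z^{\pm1},w^{\pm1}]$, and $\chi_q(\gamma)$ is a Laurent monomial times $\pm1$. Hence $\E_\mu\sigma_{q,v}$ and $\E_\mu\chi_q$ are Laurent polynomials (linear in $v$). Set $F_1(z,w)=1-\E_\mu\chi_q$. On $\T^2$, $\chi_q$ is a unitary nontrivial character (nontrivial because $\chi_q(a_2)=-1$), so \cref{lem:centered} shows $F_1\neq0$ there. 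By \eqref{eq:centered}, every coefficient of $\sigma^0_{q,v}(\gamma)$ lies in $\C[z^{\pm1},w^{\pm1},F_1^{-1}]$.

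For $q\in\T^2$ we have $\overline z=z^{-1}$ and $\overline w=w^{-1}$. So $\overline{\sigma^0_{q,u}(\gamma)}$ is obtained by inverting $z,w$ in the coefficients. It is again in the same kind of ring, with denominator $\widetilde F_1(z,w)=F_1(z^{-1},w^{-1})$, which also has no zero on $\T^2$ since it is the conjugate of $F_1$ there. Averaging over the finite support shows the entries of $A(q)$ lie in $\C[z^{\pm1},w^{\pm1},(F_1\widetilde F_1)^{-1}]$. These expressions are rational functions on $(\C^*)^2$ that agree with $A(q)$ on $\T^2$ as polynomial identities.

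Positive definiteness of $A(q)$ on $\T^2$ reduces to injectivity: if $u^*A(q)u=0$, then $\sigma^0_{q,u}$ vanishes on $\supp\mu$. By the cocycle identity it then vanishes on the semigroup generated by $\supp\mu$, and hence on $\Gamma$, since $\sigma(\gamma^{-1})=-\chi(\gamma)^{-1}\sigma(\gamma)$. So the class of $\sigma^0_{q,u}$ is zero, and $u=0$ by \cref{lem:coordinates}, which holds for every $q\in(\C^*)^2$.

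For $J_q$ we use \cref{prop:matrix}: $\det M(q^{-1})=1$ and $M$ has Laurent-polynomial entries, so $M(q^{-1})^{-1}$ is its adjugate, which has Laurent-polynomial entries. Setting $F=F_1\widetilde F_1$, formula \eqref{eq:J-rational} defines $J$ with entries in $R$. This gives a continuous extension to $\T^2$ (indeed to the complement of $\{F=0\}$) that agrees with the quarter turn at torsion points.

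For $J^2=-\Id$: at each $q\in\Ttor$, $J_q^2=-\Id$ holds because $\rot^2=-\Id$ (\cref{lem:hodge} and \eqref{eq:quarter-turn}). The entries of $J^2+\Id$ are rational functions $P/F^k$ with $P$ a Laurent polynomial vanishing on the dense set $\Ttor\subset\T^2$. By continuity $P$ vanishes on all of $\T^2$. A Laurent polynomial vanishing on $\T^2$ is zero: write $P=\sum c_{m,n}z^mw^n$; its restriction to $\T^2$ is a trigonometric polynomial, whose Fourier coefficients $c_{m,n}$ must vanish. Hence $J^2=-\Id$ in $R$.

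The main point needing care is keeping track of the fact that the entries of $A$ only become rational after using $\overline z=z^{-1}$ on $\T^2$. The denominator must therefore include the reflected factor $\widetilde F_1$, and the nonvanishing of both factors on $\T^2$ must be checked via the nontriviality of $\chi_q$.
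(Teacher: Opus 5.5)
Your proposal is correct and follows the paper's proof essentially step for step. Both arguments use the same injectivity argument for positive definiteness. Both obtain the same denominator $F=F_1\widetilde F_1=(1-\E_\mu\chi_q)(1-\E_\mu\chi_{q^{-1}})$ by rewriting conjugation on $\T^2$ as $q\mapsto q^{-1}$. Both use the same adjugate argument from $\det M=1$, and the same density-plus-Fourier-coefficient argument for $J^2=-\Id$.

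The one point you leave implicit is that trading complex conjugation for inversion of $z,w$ requires the Laurent coefficients to be real. These coefficients come from the generator formulas and the weights of $\mu$, and their reality is exactly what \eqref{eq:conjugation} records.
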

\begin{proof}
If $v^*A(q)v=0$, then $\sigma_{q,v}^0$ vanishes on $\supp\mu$.
The cocycle identity makes it vanish on inverses and products, hence on
all of $\Gamma$. Thus $[\sigma_{q,v}]=0$, and \cref{lem:coordinates}
gives $v=0$.

Put $\lambda(q)=\E_\mu\chi_q$ and $a_v(q)=\E_\mu\sigma_{q,v}$.
By finite support and \cref{lem:word-coordinates}, these are Laurent
polynomials (linear in $v$ for the second). The centered cocycle is
\[
 \sigma_{q,v}^0=\sigma_{q,v}+(\chi_q-1)\frac{a_v(q)}{1-\lambda(q)}.
\]
The denominator has no zero on $\T^2$ by \cref{lem:centered}.
For real coordinate vectors $e_j$, conjugation on the torus replaces
$q$ by $q^{-1}$, including in the centering formula. Therefore
\[
 A_{ij}(q)=\E_\mu[\sigma_{q^{-1},e_i}^0\sigma_{q,e_j}^0]
\]
is rational, with denominators dividing a power of
$F(q)=(1-\lambda(q))(1-\lambda(q^{-1}))$.
On the torus this $F$ equals $|1-\lambda(q)|^2>0$.

Since $\det M=1$, the inverse of $M(q^{-1})$ is Laurent-polynomial.
Thus \eqref{eq:J-rational} has entries in $R$ and no poles on the torus.
The identity $J_q^2=-\Id$ holds on the dense set $\Ttor$, hence on all
of $\T^2$ by continuity. After clearing denominators, each entry of
$J^2+\Id$ is a Laurent polynomial vanishing on $\T^2$. Its coefficients
are its Fourier coefficients there, so they all vanish. This proves the
identity over $R$.
\end{proof}

We stress that the finite support of $\mu$ has now been used to justify the rational extension of $(J_{q})_{q\in \Ttor}$ to the whole torus $\T^2$, by \Cref{lem:J}. The finite support  assumption will not be used again, i.e. from now on, we may only rely on \Cref{lem:J} as a blackbox.

\begin{lemma}
\label{lem:positive-projector}
The matrix
\begin{equation}
\label{eq:P}
                         P_q=\frac{\Id-iJ_q}{2} \in M_{2g-2}(\C)
\end{equation}
is an idempotent of rank $g-1$. Its image $E_q$ is positive definite
for $H_q=iM(q^{-1})$, and its kernel is negative definite.
\end{lemma}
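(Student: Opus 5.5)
The plan is to derive everything from the two facts recorded in \cref{lem:J}: the identity $J_q^2=-\Id$ on all of $\T^2$, and the defining relation \eqref{eq:J-rational}, rewritten as
\[
 M(q^{-1})J_q=\kappa^{-1}A(q),\qquad q\in\T^2.
\]
This relation holds on the whole torus because it is how $J_q$ was extended. Here $A(q)$ is positive definite and $\kappa=2L_\mu/\Area(S)>0$, since the drift $L_\mu$ is positive.

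\emph{Idempotence and decomposition.} Expanding with $J_q^2=-\Id$ gives
\[
 P_q^2=\tfrac14(\Id-2iJ_q+i^2J_q^2)=\tfrac14(2\Id-2iJ_q)=P_q .
\]
Since $J_q^2=-\Id$, the matrix $J_q$ is diagonalizable with eigenvalues in $\{\pm i\}$, so $V=V_{+i}\oplus V_{-i}$. Moreover $P_qv=v$ exactly when $J_qv=iv$, and $P_qv=0$ exactly when $J_qv=-iv$. Hence $E_q=\operatorname{im}P_q=V_{+i}$ and $\ker P_q=V_{-i}$.

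\emph{Signs of $H_q$.} For $v\in E_q$ we have $J_q^{-1}v=-J_qv=-iv$, so $v=J_q(-iv)$. Then
\[
 v^*H_qv=i\,v^*M(q^{-1})J_q(-iv)=v^*M(q^{-1})J_qv=\kappa^{-1}v^*A(q)v ,
\]
which is strictly positive for $v\neq0$. For $v\in\ker P_q$ we have $J_q^{-1}v=iv$, so $v=J_q(iv)$, and the same computation gives
\[
 v^*H_qv=i\cdot i\,\kappa^{-1}v^*A(q)v=-\kappa^{-1}v^*A(q)v<0 .
\]
Thus $E_q$ is positive definite and $\ker P_q$ is negative definite for $H_q$.

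\emph{Rank.} The Hermitian form $H_q$ has signature $(g-1,g-1)$: each $2\times2$ block has determinant $-1$ on the whole torus. A positive definite subspace therefore has dimension at most $g-1$, and so does a negative definite one. Since $E_q\oplus\ker P_q=V$ has dimension $2g-2$, both must have dimension exactly $g-1$. This gives the rank at every $q\in\T^2$, with no continuity argument needed.

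There is no real obstacle here. The only points needing care are the bookkeeping of the sign in $J_q^{-1}=-J_q$, and the observation that positivity of $A(q)$ and the relation $M(q^{-1})J_q=\kappa^{-1}A(q)$ hold on all of $\T^2$, not merely at torsion points; both are supplied by \cref{lem:J}.
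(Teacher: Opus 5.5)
Your proof is correct and follows essentially the same route as the paper: idempotence from $J_q^2=-\Id$, identification of $E_q$ and $\ker P_q$ with the $\pm i$ eigenspaces of $J_q$, the sign of $H_q$ on each read off from $A(q)=\kappa M(q^{-1})J_q$ together with positive definiteness of $A(q)$, and the rank forced by the signature $(g-1,g-1)$. Your detour through $v=J_q(\mp iv)$ is equivalent to the paper's direct substitution $J_qv=\pm iv$ into $v^*A(q)v=\kappa\, v^*M(q^{-1})J_qv$.
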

\begin{proof}
Idempotence follows from $J^2=-\Id$. The image is the $+i$ eigenspace
of $J$, and the kernel is the $-i$ eigenspace.
By \eqref{eq:J-rational}, $A=\kappa M(q^{-1})J$. Consequently
\[
 v^*A(q)v=\begin{cases}
    \kappa v^*H_qv,&J_qv=iv,\\
    -\kappa v^*H_qv,&J_qv=-iv.
 \end{cases}
\]
Positive definiteness of $A$ proves the two sign assertions.
These complementary spaces fill $V$, while $H_q$ has signature
$(g-1,g-1)$, so each has dimension $g-1$.
\end{proof}

We now show that a rational family of projections as above has a
trivial determinant line.

\begin{lemma}
\label{lem:algebra}
Let $P\in M_n(R)$ satisfy $P^2=P$, where $R$ is as in \eqref{eq:ring}
and $F$ has no zero on $\T^2$. Suppose $P(q)$ has constant rank $p\ge1$
on the torus. Then $q\mapsto\det\im P(q)$ has a nowhere-zero continuous
section.
\end{lemma}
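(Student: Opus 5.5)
The plan is to make the statement algebraic. The image of $P$ is a projective $R$-module whose top exterior power is a rank-one projective module. We will show that this module is free because $R$ is a unique factorization domain, and then evaluate a generator pointwise on $\T^2$ to get the required section.

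First, I set $\mathcal E=\im P=P R^n\subseteq R^n$. Since $P^2=P$, we have $R^n=\im P\oplus\ker P$ with $\ker P=\im(\Id-P)$, so $\mathcal E$ is a finitely generated projective $R$-module. Next I check that it has constant rank $p$. The rank of an idempotent equals its trace, and $\tr P\in R$. At every $q\in\T^2$ the evaluation $R\to\C$ is a well-defined ring homomorphism, because $F(q)\neq0$, and there $\tr P(q)=\rank P(q)=p$. So $\tr P-p\in R$ vanishes on $\T^2$. Clearing the denominator $F^k$ turns it into a Laurent polynomial vanishing on $\T^2$, hence zero by the Fourier-coefficient argument already used in \cref{lem:J}. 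Therefore $\tr P=p$ in $R$, and $\mathcal E\otimes_R\mathrm{Frac}(R)$ has dimension $p$. It follows that $\mathcal L:=\bigwedge^p_R\mathcal E$ is a rank-one projective module. It is also a direct summand of $\bigwedge^p_R R^n$, since $\bigwedge^p P$ is an idempotent with image $\mathcal L$.

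The key algebraic input is that $\mathrm{Pic}(R)=0$. The ring $\C[z,w]$ is a Noetherian UFD, and any localization of a UFD is again a UFD. This applies both to $\C[z^{\pm1},w^{\pm1}]$ and to $R=\C[z^{\pm1},w^{\pm1}][F^{-1}]$. A Noetherian UFD has trivial Picard group: an invertible module is isomorphic to an invertible ideal, invertible ideals are divisorial, and divisorial ideals are principal in a UFD. Hence $\mathcal L\cong R$. Let $s\in\bigwedge^p R^n$ be a generator, so that $\mathcal L=Rs$.

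Finally, I evaluate pointwise. For $q\in\T^2$, consider $\mathrm{ev}_q:R\to\C$. Since $\mathcal E$ is a direct summand of $R^n$ with complement $\ker P$, base change gives $\mathcal E\otimes_R\C=\im P(q)$. Exterior powers commute with base change, so $\mathcal L\otimes_R\C=\bigwedge^p\im P(q)=\det\im P(q)$, viewed inside $\bigwedge^p\C^n$. Because $s$ generates $\mathcal L$, the value $s(q)$ spans this line. The line is nonzero because $\rank P(q)=p$, so $s(q)\neq0$. The coordinates of $s$ lie in $R$, so they are continuous functions on $\T^2$, and $q\mapsto s(q)$ is the desired nowhere-zero continuous section.

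I expect the main obstacle to be stating the triviality of $\mathrm{Pic}(R)$ carefully. It should be justified through the UFD property and its stability under localization, rather than through a Quillen--Suslin-type freeness theorem, which would not apply directly to the localized ring $R$. A second point needing care is the compatibility of the image with evaluation at $q$, and this is exactly where the direct-summand decomposition $R^n=\im P\oplus\ker P$ is used.
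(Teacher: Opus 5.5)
Your proof is correct, but it takes a more conceptual route than the paper. Both arguments rest on the same input: $R$ is a UFD, being a localization of $\C[z,w]$. The difference lies in how that input is turned into a global generator.

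You invoke the general fact that a rank-one projective module over a Noetherian UFD is free, i.e.\ $\mathrm{Pic}(R)=0$, via invertible $\Rightarrow$ divisorial $\Rightarrow$ principal. You then handle evaluation at $q$ through base change of the direct summand $\im P\subseteq R^n$ and the compatibility of exterior powers with base change.

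The paper instead proves by hand exactly the special case it needs. It takes $Q=\bigwedge^pP$, which has rank one over $K=\C(z,w)$, and divides a nonzero column by the gcd of its entries to get a primitive vector $a$. It writes $Q=ab^{\mathsf T}$ over $K$ and uses unique factorization plus primitivity of $a$ to show that $b$ has entries in $R$. Then $Qa=a$ forces $b^{\mathsf T}a=1$. Evaluated at each $q\in\T^2$, this identity shows that $a(q)\neq0$, and $Q(q)a(q)=a(q)$ places $a(q)$ in $\det\im P(q)$.

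The paper's route is elementary and self-contained. It produces an explicit section (a primitive column of $\bigwedge^pP$), and nonvanishing is certified by the identity $b^{\mathsf T}a=1$ rather than by a base-change argument. Your route explains the phenomenon structurally and would apply verbatim over any ring with trivial Picard group.

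A minor point: your constant-rank step via the Fourier-coefficient argument is valid but more than needed. Over $K$ an idempotent has eigenvalues in $\{0,1\}$, so $\tr P$ is already the integer $\rank_KP$, and a single evaluation identifies it with $p$, as the paper does.
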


\begin{proof}
By Gauss' lemma, $\C[z,w]$ is a unique factorization domain
\cite[Section~9.3]{DF}; so is its localization $R$.
Put $K=\C(z,w)$.

Since $P^2=P$, its eigenvalues over $K$ belong to $\{0,1\}$.
Thus $\tr P=\rank_KP$ is a constant integer, and evaluation on
$\T^2$ gives $\rank_KP=p$. Set
\[
 m=\binom np,\qquad Q=\bigwedge\nolimits^pP\in M_m(R).
\]
Then $Q^2=Q$, $\rank_KQ=1$, and
\[
 \im Q(q)=\bigwedge\nolimits^p\im P(q)
          =\det\im P(q)
 \qquad(q\in\T^2).
\]

Choose a nonzero column $c$ of $Q$ and put
\[
 d=\gcd(c_1,\ldots,c_m),\qquad a=d^{-1}c\in R^m.
\]
Then $\gcd(a_1,\ldots,a_m)=1$ and $Qa=a$.
Since $\im_KQ=Ka$, there exists $b\in K^m$ such that
$Q=ab^{\mathsf T}$.

We claim that $b\in R^m$. Write each $b_j=r_j/s_j$ with
$r_j,s_j\in R$, $s_j\ne0$, and $\gcd(r_j,s_j)=1$.
The identities $a_i b_j=Q_{ij}\in R$ imply
\[
 \bigl(s_j\mid r_j a_i\ \text{for every }i\bigr)
 \ \Longrightarrow\
 \bigl(s_j\mid a_i\ \text{for every }i\bigr)
 \ \Longrightarrow\
 s_j\in R^\times.
\]
Here the first implication uses unique factorization and
$\gcd(r_j,s_j)=1$, and the second uses
$\gcd(a_1,\ldots,a_m)=1$. Hence $b_j\in R$.

Now $a=Qa=a(b^{\mathsf T}a)$, so $b^{\mathsf T}a=1$.
Since $F$ has no zero on $\T^2$, evaluation at every $q\in\T^2$
is defined on $R$ and gives
\[
 b(q)^{\mathsf T}a(q)=1,\qquad Q(q)a(q)=a(q).
\]
Consequently,
\[
 a(q)\in\det\im P(q)\setminus\{0\}
 \qquad(q\in\T^2).
\]
The coordinates of $a$ belong to $R$ and are continuous on $\T^2$,
so $q\mapsto a(q)$ is the required section.
\end{proof}

We may now conclude the proof of the singularity conjecture for surface groups. 

\begin{proof}[Proof of \cref{thm:main} for surface groups]
By \cref{lem:algebra}, the planes $E_q$ supplied by
\cref{lem:positive-projector} have a nowhere-zero determinant section.

Let $E_q^-$ be the negative spectral space of $H_q$.
Projection onto $E_q^+$ along $E_q^-$ restricts to an isomorphism
$E_q\to E_q^+$. Indeed,  a non-zero vector in its kernel would be both positive
and negative for $H_q$, so the projection $E_q\to E_q^+$ is injective, whence an isomorphism as dimensions coincide. 

This gives a continuous family of isomorphisms, and its top exterior power
transfers the nowhere-zero section for $\det E_q$ to one for $\det E_q^+$. 
This contradicts \cref{prop:topology}. We have thus established that the standing assumption $\nu\not \perp \leb$ is false, in other terms $\nu \perp \leb$ as desired.

\end{proof}
\bigskip

\begin{remark}
\label{rem:rational-extension}
The contradiction applies more generally to any generating measure with
finite exponential word moment and for which the centered covariance matrix
\eqref{eq:walk-matrix} has rational entries without poles on the torus.
Note indeed, all other parts of the surface argument were proved under this moment
assumption, and the argument in the present section only used such rationality. This observation is what permits to generalize the surface group case to cocompact discrete subgroups $\Gamma\leq \mathrm{PSL}_{2}(\R)$ which may have  torsion.
\end{remark}

\section{Allowing \texorpdfstring{$\Gamma$}{Gamma} to have torsion}
\label{sec:torsion}

Let $\Gamma<\PSL_2(\R)$ now be a cocompact discrete group, possibly
with torsion, and let $\mu$ be finitely supported and generate $\Gamma$.
Passing to a surface subgroup produces a first-return measure with
exponential moment whose support may be infinite. We will show that its
covariance still has the rationality needed in the preceding section, see \cref{rem:rational-extension}.

Selberg's lemma
\cite{Nica}, applied to the faithful adjoint representation of
$\PSL_2(\R)$, supplies a torsion-free finite-index subgroup. Intersecting
its finitely many conjugates gives a torsion-free normal subgroup
$\Gamma_0\lhd\Gamma$. The surface $S=\Gamma_0\backslash\D$ is closed
and has genus at least two.

Let $\mu_0$ be the first-return law of the original finitely supported
$\mu$-walk to $\Gamma_0$. By \cref{lem:returns}, $\mu_0$ has an exponential
word moment, generates $\Gamma_0$ as a group, and has the same hitting
measure $\nu$. Construct $\chi_q$ and $\sigma_{q,v}$ on $\Gamma_0$
as in Subsection~\ref{subsec:family}, and center with respect to $\mu_0$.
We will verify the rationality hypothesis of \cref{rem:rational-extension}.

Let $Q=\Gamma_0\backslash\Gamma$. For each $j\in Q$, choose a representative $t_j \in \Gamma$, with
$t_e=1$. For $s\in\supp\mu$, write
\[
             t_js=h(j,s)t_{js},\qquad
             h(j,s)=t_jst_{js}^{-1}\in\Gamma_0.
\]
The directed edge $j\to js$ has probability $\mu(s)$ and group label
$h(j,s)$. There are finitely many labels. For a path from $e$, inserting
representatives gives
\[
 h(e,s_1)h(j_1,s_2)\cdots h(j_{n-1},s_n)
                         =s_1\cdots s_n t_{j_n}^{-1}.
\]
At a return to $e$ this product is exactly the return element in $\Gamma_0$.

For a matrix representation $\varrho$ of $\Gamma_0$, form the finite
block matrix
\[
 (K_\varrho)_{jk}=\sum_{\substack{s\in\supp\mu\\js=k}}
                                      \mu(s)\varrho(h(j,s)).
\]
Put $T=Q\setminus\{e\}$. For subsets $U,V\subseteq Q$, write
\[
 (K_\varrho)_{UV}
 :=\bigl((K_\varrho)_{jk}\bigr)_{j\in U,\ k\in V},
\]
abbreviating $\{e\}$ to $e$ in the subscripts. Thus
\[
 K_\varrho=
 \begin{pmatrix}
  (K_\varrho)_{ee}&(K_\varrho)_{eT}\\
  (K_\varrho)_{Te}&(K_\varrho)_{TT}
 \end{pmatrix}.
\]
For the representations used below, the following series converges:
\begin{equation}
\label{eq:resolvent}
 \begin{aligned}
 \E_{\mu_0}\varrho
 &=(K_\varrho)_{ee}
   +\sum_{n\ge0}
     (K_\varrho)_{eT}(K_\varrho)_{TT}^{\,n}(K_\varrho)_{Te}\\
 &=(K_\varrho)_{ee}
   +(K_\varrho)_{eT}
     \bigl(I-(K_\varrho)_{TT}\bigr)^{-1}
     (K_\varrho)_{Te}.
 \end{aligned}
\end{equation}
Indeed, $(K_\varrho)_{ee}$ accounts for one-step returns, while
the $n$th summand accounts for first-return paths of length $n+2$:
their intermediate states all belong to $T$.
If $T=\varnothing$, only $(K_\varrho)_{ee}$ remains.

\begin{lemma}
\label{lem:torsion-rational}
The matrix $A(q)$ defined by \eqref{eq:walk-matrix} with $\mu_0$ in place
of $\mu$ has its entries belonging to a ring
$\C[z^{\pm1},w^{\pm1},F^{-1}]$ with $F\in \C[z^{\pm1},w^{\pm1}]$ nonzero on $\T^2$.
\end{lemma}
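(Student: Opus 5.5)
Recall how $A(q)$ is built: with $\lambda(q)=\E_{\mu_0}\chi_q$ and $a_v(q)=\E_{\mu_0}\sigma_{q,v}$,
\[
 A_{ij}(q)=\E_{\mu_0}\bigl[\sigma^0_{q^{-1},e_i}\sigma^0_{q,e_j}\bigr],\qquad
 \sigma^0_{q,v}=\sigma_{q,v}+(\chi_q-1)\frac{a_v(q)}{1-\lambda(q)}.
\]
Expanding the product, every entry of $A(q)$ is a polynomial combination of a few first- and second-order quantities, divided by powers of $1-\lambda(q)$ and $1-\lambda(q^{-1})$. The first-order quantities are $\lambda(q)$, $\lambda(q^{-1})$, $a_{e_j}(q)$, $a_{e_i}(q^{-1})$. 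The second-order ones are the mixed moments $\E_{\mu_0}[\chi_{q^{-1}}\chi_q]=1$, $\E_{\mu_0}[\chi_{q^{-1}}\sigma_{q,e_j}]$, $\E_{\mu_0}[\sigma_{q^{-1},e_i}\chi_q]$ and $\E_{\mu_0}[\sigma_{q^{-1},e_i}\sigma_{q,e_j}]$. So the plan is to show that each of these expectations lies in $\C[z^{\pm1},w^{\pm1},G^{-1}]$ for some Laurent polynomial $G$ with no zero on $\T^2$. Then $F=G\cdot(1-\lambda(q))(1-\lambda(q^{-1}))$ works, provided $\lambda$ itself is rational in this sense. On $\T^2$ the last two factors equal $|1-\lambda(q)|^2$, which is nonzero by \cref{lem:centered} since $\chi_q\neq1$ on $\Gamma_0$.

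To compute these expectations, encode all the needed functions of $h\in\Gamma_0$ as matrix coefficients of a single finite-dimensional representation with Laurent-polynomial entries. Take the affine representations
\[
 \varrho_q(h)=\begin{pmatrix}\chi_q(h)&\sigma_{q,e_j}(h)\\0&1\end{pmatrix}
\]
from the proof of \cref{lem:word-coordinates}, and form the tensor product $\varrho=\varrho_{q^{-1}}\otimes\varrho_q$. This is a homomorphism because the cocycle identity is exactly multiplicativity of the affine matrices. Its matrix coefficients include $\chi_q$, $\sigma_{q,e_j}$, $\chi_{q^{-1}}$, $\sigma_{q^{-1},e_i}$ and all the products listed above. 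By \cref{lem:word-coordinates}, each $\varrho(h(j,s))$ has entries in $\C[z^{\pm1},w^{\pm1}]$, and there are finitely many labels $h(j,s)$. Hence $K_\varrho$ is a finite block matrix with Laurent-polynomial entries. Formula \eqref{eq:resolvent} then gives every needed expectation as an entry of $(K_\varrho)_{ee}+(K_\varrho)_{eT}\operatorname{adj}(I-(K_\varrho)_{TT})(K_\varrho)_{Te}/\det(I-(K_\varrho)_{TT})$. So the natural candidate is $G=\det(I-(K_\varrho)_{TT})$, a Laurent polynomial.

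The main obstacle is to justify \eqref{eq:resolvent} for this $\varrho$ and to show that $G$ does not vanish on $\T^2$. Both follow if the spectral radius of $(K_\varrho)_{TT}$ is $<1$ for $q\in\T^2$. I will first choose a basis adapted to the upper-triangular structure. In it, $\varrho(h)$ is block upper triangular, and its diagonal blocks are $\chi_{q^{-1}}\chi_q=1$, $\chi_{q^{-1}}$, $\chi_q$ and $1$, all of modulus one. Consequently $(K_\varrho)_{TT}$ is block triangular, and each diagonal block is the substochastic matrix $(P_{jk})_{j,k\in T}$ twisted by unimodular scalars. Each such block is dominated entrywise in modulus by the killed transition matrix $P_{TT}$ of the walk on $Q$ avoided at $e$. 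Its spectral radius is $<1$ because the quotient walk is irreducible and the killing time has an exponential tail, as in \cref{lem:returns}. So $\det(I-(K_\varrho)_{TT})$ factors as a product of the nonvanishing determinants of the diagonal blocks, and $I-(K_\varrho)_{TT}$ is invertible on $\T^2$.

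Convergence of the series in \eqref{eq:resolvent} is the remaining point: the off-diagonal (cocycle) parts of the blocks grow only polynomially in $n$, so the Neumann series converges. Identifying it with $\E_{\mu_0}\varrho$ uses the path decomposition described before the statement. Absolute convergence comes from the exponential moment of the return time together with the linear growth of cocycles. This yields the claimed ring, with $F$ built as above.
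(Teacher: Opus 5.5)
Your proposal is correct and has the same structure as the paper's proof: the tensor product $\varrho_{q^{-1}}\otimes\varrho_q$ of affine representations, the transfer matrix $K_\varrho$ with the first-return resolvent formula \eqref{eq:resolvent}, and centering through $1-\lambda$.

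The one step you argue differently is invertibility of $I-(K_\varrho)_{TT}$ on $\T^2$. The paper bounds $\sup_{q\in\T^2}\norm{(K^{ij}_{TT})^n}\le C(1+n)^2r^n$ directly by summing over paths. You instead use the upper triangular form of $\varrho(h)$ to reduce to the diagonal blocks. These are twists of the killed transition matrix $P_{TT}$ by the unimodular values $1,\chi_{q^{-1}},\chi_q,1$, so their spectral radius is at most $\rho(P_{TT})<1$. This gives extra information: $\det(I-(K_\varrho)_{TT})$ depends only on the characters, not on the cocycles or on $(i,j)$. So the paper's product $D$ over all pairs $(i,j)$ is not needed.

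One slip needs fixing. $G\,(1-\lambda(q))(1-\lambda(q^{-1}))$ is not a Laurent polynomial, because $\lambda$ only lies in $\C[z^{\pm1},w^{\pm1},G^{-1}]$. However, $G\lambda(q)$ and $G\lambda(q^{-1})$ are entries of $G\,\E_{\mu_0}\varrho$, hence Laurent polynomials. So take instead $F=G\cdot G(1-\lambda(q))\cdot G(1-\lambda(q^{-1}))$. This $F$ is nonzero on $\T^2$, and $G$, $1-\lambda(q)$ and $1-\lambda(q^{-1})$ are all invertible in $\C[z^{\pm1},w^{\pm1},F^{-1}]$. This matches what the paper does with $F=Df_+f_-$.
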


\begin{proof}
Put $\mathcal L=\C[z^{\pm1},w^{\pm1}]$, and let
$e_1,\ldots,e_m$ be the standard basis of $V$. Write
\[
 \chi=\chi_q,\qquad \widetilde\chi=\chi_{q^{-1}},\qquad
 b_j=\sigma_{q,e_j},\qquad
 \widetilde b_i=\sigma_{q^{-1},e_i}.
\]
For each $i,j$, consider the tensor product of affine representations
\[
 \varrho_{ij}
 =
 \begin{pmatrix}\chi&b_j\\0&1\end{pmatrix}
 \otimes
 \begin{pmatrix}\widetilde\chi&\widetilde b_i\\0&1\end{pmatrix}
 =
 \begin{pmatrix}
 1&\chi\widetilde b_i&b_j\widetilde\chi&b_j\widetilde b_i\\
 0&\chi&0&b_j\\
 0&0&\widetilde\chi&\widetilde b_i\\
 0&0&0&1
 \end{pmatrix}.
\]
Set $K^{ij}=K_{\varrho_{ij}}$. Its entries belong to $\mathcal L$
by \cref{lem:word-coordinates}, since the edge labels form a fixed
finite set.

We first justify the inverses in \eqref{eq:resolvent}.
On $\T^2$, characters have modulus one and edge-cocycle values are
uniformly bounded. Thus, for any sequence of edge labels
$h_1,\ldots,h_n$, the cocycle identity gives
\[
 \sup_{q\in\T^2}
 \norm{\varrho_{ij}(h_1\cdots h_n)}
 \le C(1+n)^2.
\]
Combining this with the exponential survival estimate in
\cref{lem:returns}, and summing over paths staying in $T$, yields
\[
 \sup_{q\in\T^2}\norm{(K^{ij}_{TT})^n}
 \le C(1+n)^2r^n,\qquad 0<r<1.
\]
Hence the Neumann series converges uniformly on $\T^2$, and
\[
 D(q):=\prod_{i,j}\det(I-K^{ij}_{TT}(q))\in\mathcal L
\]
has no zero there. When $T=\varnothing$, take $D=1$.
By \eqref{eq:resolvent} and the adjugate formula, the moment matrices
\[
 C_{ij}(q):=\E_{\mu_0}\varrho_{ij}
\]
satisfy $DC_{ij}\in M_4(\mathcal L)$.

It remains to center the cocycles. Set
\[
 \lambda=\E_{\mu_0}\chi,\qquad
 \widetilde\lambda=\E_{\mu_0}\widetilde\chi.
\]
These are diagonal entries of $C_{ij}$.
Since $\supp\mu_0$ generates $\Gamma_0$ and
$\chi(a_2)=\widetilde\chi(a_2)=-1$, \cref{lem:centered} gives
\[
 (1-\lambda(q))(1-\widetilde\lambda(q))\ne0
 \qquad(q\in\T^2).
\]
Consequently, the Laurent polynomials
\[
 f_+=D(1-\lambda),\qquad
 f_-=D(1-\widetilde\lambda),\qquad
 F=Df_+f_-
\]
have no zero on $\T^2$. In the ring $R=\mathcal L[F^{-1}]$, we have
\[
 c_j:=\frac{\E_{\mu_0}b_j}{1-\lambda}\in R,\qquad
 \widetilde c_i:=
 \frac{\E_{\mu_0}\widetilde b_i}{1-\widetilde\lambda}\in R,
\]
because the numerators are entries of $C_{ij}$.

On $\T^2$, conjugation gives
$\widetilde b_i=\overline{b_i}$ and
$\widetilde c_i=\overline{c_i}$.
Expanding the centered covariance therefore gives
\[
 \begin{aligned}
 A_{ij}
 &=\E_{\mu_0}\left[
   \bigl(b_j+(\chi-1)c_j\bigr)
   \bigl(\widetilde b_i+
         (\widetilde\chi-1)\widetilde c_i\bigr)\right]\\
 &=(C_{ij})_{14}
     +c_j(C_{ij})_{12}
     +\widetilde c_i(C_{ij})_{13}\in R.
 \end{aligned}
\]
The last equality uses
$\chi\widetilde\chi=1$,
$\E_{\mu_0}b_j=(1-\lambda)c_j$, and
$\E_{\mu_0}\widetilde b_i
 =(1-\widetilde\lambda)\widetilde c_i$.
This proves the assertion.
\end{proof}

\begin{proof}[Proof of \cref{thm:main}]
The measure $\mu_0$ generates the surface group $\Gamma_0$ and has
finite exponential word moment. By
\cref{rem:rational-extension,lem:torsion-rational}, the proof in
\cref{sec:rational} applies to $\mu_0$: assuming $\nu\not\perp\leb$
gives the same topological contradiction. Hence $\nu\perp\leb$.
Since $\mu$ and $\mu_0$ have the same hitting measure, this proves the
cocompact case. Together with the reductions in the introduction, it
completes the proof of \cref{thm:main}.
\end{proof}


\begin{thebibliography}{99}

\bibitem{BaranyPollicottSimon}
B.~B{\'a}r{\'a}ny, M.~Pollicott and K.~Simon,
\emph{Stationary measures for projective transformations: the Blackwell and Furstenberg measures},
J. Stat. Phys. \textbf{148} (2012), no.~3, 393--421.

\bibitem{Beardon0}
A.~F. Beardon,
\emph{Inequalities for certain Fuchsian groups},
Acta Math. \textbf{127} (1971), 221--258.

\bibitem{Beardon}
A.~F.~Beardon,
\emph{The Geometry of Discrete Groups},
Graduate Texts in Mathematics~91, Springer, 1983.
\href{https://doi.org/10.1007/978-1-4612-1146-4}{doi:10.1007/978-1-4612-1146-4}.

\bibitem{Benard}
T.~B\'enard,
\emph{Winding of geodesic rays chosen by a harmonic measure},
Math. Ann. \textbf{390} (2024), 1419--1465.
\href{https://doi.org/10.1007/s00208-023-02754-z}{doi:10.1007/s00208-023-02754-z}.
\href{https://arxiv.org/abs/2211.15232v3}{arXiv:2211.15232v3}.

\bibitem{BQ}
Y.~Benoist and J.-F.~Quint,
\emph{On the regularity of stationary measures},
Israel J. Math. \textbf{226} (2018), no.~1, 1--14.
\href{https://www.imo.universite-paris-saclay.fr/~yves.benoist/prepubli/15regularity.pdf}
{Authors' preprint}.

\bibitem{BlachereHaissinskyMathieu}
S.~Blach\`ere, P.~Ha{\"i}ssinsky and P.~Mathieu,
\emph{Harmonic measures versus quasiconformal measures for hyperbolic groups},
Ann. Sci. \'Ec. Norm. Sup\'er. (4) \textbf{44} (2011), no.~4, 683--721.

\bibitem{BKT}
N.~Bogachev, P.~Kosenko, and G.~Tiozzo,
\emph{Random walks on cocompact Fuchsian and Kleinian groups},
preprint (2025),
\href{https://arxiv.org/abs/2512.09900v1}{arXiv:2512.09900v1}.

\bibitem{Bourgain}
J.~Bourgain,
\emph{Finitely supported measures on $SL_2(\R)$ which are absolutely
continuous at infinity},
in \emph{Geometric Aspects of Functional Analysis},
Lecture Notes in Mathematics~2050, Springer, 2012, 133--141.
\href{https://doi.org/10.1007/978-3-642-29849-3_7}
{doi:10.1007/978-3-642-29849-3\_7}.


\bibitem{CLP}
M.~Carrasco, P.~Lessa, and E.~Paquette,
\emph{On the speed of distance-stationary sequences},
ALEA Lat. Am. J. Probab. Math. Stat. \textbf{18} (2021), 829--854.
\href{https://doi.org/10.30757/ALEA.v18-30}
{doi:10.30757/ALEA.v18-30}.


\bibitem{DeroinKleptsynNavas}
B.~Deroin, V.~Kleptsyn and A.~Navas,
\emph{On the question of ergodicity for minimal group actions on the circle},
Mosc. Math. J. \textbf{9} (2009), no.~2, 263--303.


\bibitem{DF}
D.~S.~Dummit and R.~M.~Foote,
\emph{Abstract Algebra}, 3rd ed., Wiley, 2004.

\bibitem{Evans}
L.~C.~Evans,
\emph{Partial Differential Equations}, 2nd ed.,
Graduate Studies in Mathematics~19, American Mathematical Society, 2010.


\bibitem{Furstenberg63}
H.~Furstenberg,
\emph{Noncommuting random products},
Trans. Amer. Math. Soc. \textbf{108} (1963), 377--428.

\bibitem{Furstenberg}
H.~Furstenberg,
\emph{Random walks and discrete subgroups of Lie groups},
in \emph{Advances in Probability and Related Topics}, Vol.~1,
Marcel Dekker, New York, 1971, 1--63.

\bibitem{CFFT}
K.~Chawla, B.~Forghani, J.~Frisch, and G.~Tiozzo,
\emph{The Poisson boundary of hyperbolic groups without moment conditions},
Ann. Probab. \textbf{53} (2025), no.~5, 1897--1918.
\href{https://doi.org/10.1214/24-AOP1753}{doi:10.1214/24-AOP1753}.

\bibitem{GMT}
V.~Gadre, J.~Maher, and G.~Tiozzo,
\emph{Word length statistics and Lyapunov exponents for Fuchsian groups
with cusps},
New York J. Math. \textbf{21} (2015), 511--531.
\href{https://nyjm.albany.edu/j/2015/21-23.html}{Journal page}.

\bibitem{GLJ90}
Y.~Guivarc'h and Y.~Le Jan,
\emph{Sur l'enroulement du flot g\'eod\'esique},
C. R. Acad. Sci. Paris S\'er. I Math. \textbf{311} (1990), no.~10, 645--648.

\bibitem{GLJ93}
Y.~Guivarc'h and Y.~Le Jan,
\emph{Asymptotic winding of the geodesic flow on modular surfaces and
continuous fractions},
Ann. Sci. \'{E}cole Norm. Sup. (4) \textbf{26} (1993), no.~1, 23--50.
\href{https://www.numdam.org/item/ASENS_1993_4_26_1_23_0/}{Numdam}.

\bibitem{GLJ96}
Y.~Guivarc'h and Y.~Le Jan,
\emph{Note rectificative: ``Asymptotic winding of the geodesic flow on
modular surfaces and continuous fractions''},
Ann. Sci. \'{E}cole Norm. Sup. (4) \textbf{29} (1996), no.~6, 811--814.
\href{https://doi.org/10.24033/asens.1755}{doi:10.24033/asens.1755}.

\bibitem{Hatcher}
A.~Hatcher,
\emph{Algebraic Topology}, Cambridge University Press, 2002.
\href{https://pi.math.cornell.edu/~hatcher/AT/AT.pdf}{Author's online text}.

\bibitem{Koberda}
T.~Koberda,
\emph{Teichm\"uller polynomials, Alexander polynomials and finite covers of surfaces},
arXiv:1110.3746, 2011.

\bibitem{KLP}
V.~A.~Kaimanovich and V.~Le Prince,
\emph{Matrix random products with singular harmonic measure},
Geom. Dedicata \textbf{150} (2011), 257--279.
\href{https://arxiv.org/abs/0807.1015}{arXiv:0807.1015}.

\bibitem{KZ}
D.~M.~Kim and A.~Zimmer,
\emph{A note on the singularity conjecture for infinite covolume discrete
subgroups}, preprint (2025),
\href{https://arxiv.org/abs/2508.05756v1}{arXiv:2508.05756v1}.

\bibitem{Kosenko}
P.~Kosenko,
\emph{Fundamental inequality for hyperbolic Coxeter and Fuchsian groups equipped with geometric distances},
Int. Math. Res. Not. IMRN \textbf{2021}, no.~6, 4709--4728.

\bibitem{KT}
P.~Kosenko and G.~Tiozzo,
\emph{The fundamental inequality for cocompact Fuchsian groups},
Forum Math. Sigma \textbf{10} (2022), Paper No.~e102.
\href{https://doi.org/10.1017/fms.2022.94}{doi:10.1017/fms.2022.94}.

\bibitem{LVT}
H.~Lee, G.~Tiozzo, and W.~Van LimBeek,
\emph{Singularity of Furstenberg measure for infinite covolume discrete
subgroups in higher rank}, preprint (2025),
\href{https://arxiv.org/abs/2508.06329}{arXiv:2508.06329}.

\bibitem{Lee}
J.~M.~Lee,
\emph{Introduction to Smooth Manifolds}, 2nd ed.,
Graduate Texts in Mathematics~218, Springer, 2013.
\href{https://doi.org/10.1007/978-1-4419-9982-5}{doi:10.1007/978-1-4419-9982-5}.

\bibitem{Lequen}
F.~Lequen,
\emph{Absolutely continuous Furstenberg measures for finitely-supported random walks},
Geom. Dedicata \textbf{219} (2025), no.~2, Paper No.~36.

\bibitem{LNP}
J.~Li, F.~Naud, and W.~Pan, with an appendix by J.~Li,
\emph{Kleinian Schottky groups, Patterson--Sullivan measures, and Fourier decay},
Duke Math. J. \textbf{170} (2021), no.~4, 775--825.
\href{https://doi.org/10.1215/00127094-2020-0058}{doi:10.1215/00127094-2020-0058}.
\href{https://arxiv.org/abs/1902.01103}{arXiv:1902.01103}.

\bibitem{MT}
J.~Maher and G.~Tiozzo,
\emph{Random walks on weakly hyperbolic groups},
J. Reine Angew. Math. \textbf{742} (2018), 187--239.
\href{https://arxiv.org/abs/1410.4173}{arXiv:1410.4173}.

\bibitem{Nica}
B.~Nica,
\emph{Linear groups---Malcev's theorem and Selberg's lemma},
preprint (2013),
\href{https://arxiv.org/abs/1306.2385}{arXiv:1306.2385}.

\bibitem{Quint}
J.-F.~Quint,
\emph{An overview of Patterson--Sullivan theory},
lecture notes, 2006.
\href{https://www.math.u-bordeaux.fr/~jquint/publications/courszurich.pdf}
{Author's online notes}.

\bibitem{RT}
A.~Randecker and G.~Tiozzo,
\emph{Cusp excursion in hyperbolic manifolds and singularity of harmonic measure},
J. Mod. Dyn. \textbf{17} (2021), 183--211.
\href{https://doi.org/10.3934/jmd.2021006}{doi:10.3934/jmd.2021006}.

\bibitem{Ratner}
M.~Ratner,
\emph{The central limit theorem for geodesic flows on $n$-dimensional
manifolds of negative curvature},
Israel J. Math. \textbf{16} (1973), 181--197.
\href{https://doi.org/10.1007/BF02757869}{doi:10.1007/BF02757869}.

\bibitem{Warner}
F.~W. Warner,
\emph{Foundations of Differentiable Manifolds and Lie Groups},
Graduate Texts in Mathematics~94,
Springer-Verlag, New York, 1983.


\end{thebibliography}
\end{document}